\documentclass{amsart}
\usepackage{graphicx}
\usepackage{amsfonts}
\usepackage{amsmath}
\usepackage{amsthm}
\usepackage{amssymb}
\usepackage{mathrsfs}
\usepackage{tikz}
\usepackage[numbers]{natbib}
\usepackage[fit]{truncate}
\usepackage{color}
\usepackage{bm}
\usepackage{hyperref}

\usetikzlibrary{positioning}
\usetikzlibrary{decorations,arrows}
\usetikzlibrary{decorations.markings}

\newcommand{\truncateit}[1]{\truncate{1\textwidth}{#1}}
\newcommand{\scititle}[1]{\title[\truncateit{#1}]{#1}}

\theoremstyle{plain}
\newtheorem{theorem}{Theorem}[section]

\newtheorem{corollary}[theorem]{Corollary}

\newtheorem{definition}[theorem]{Definition}

\newtheorem{lemma}[theorem]{Lemma}

\newtheorem{proposition}[theorem]{Proposition}
\newtheorem{remark}[theorem]{Remark}

\begin{document}

\scititle{Hitting probability for Reflected Brownian Motion at Small Target}

\author{Yuchen Fan}
\date{July 2023}

\maketitle
\begin{abstract}
    We derive the asymptotic behavior of hitting probability at a small target of size $O(\epsilon)$ for reflected Brownian motion in domains with suitable smooth boundary conditions, where the boundary of the domain contains both reflecting parts, absorbing part, and target. In this case, the domain could be localized near the target, making explicit computations possible. The asymptotic behavior is only related to $\epsilon$ up to some multiplicative constants that depend on the domain and boundary conditions.
\end{abstract}

\tableofcontents

\section{Introduction}
Reflected Brownian motions, or more generally reflected Wiener processes, can roughly be thought of as a Brownian motion that is ``reflected" when it hits some subsets. Although they have simple, intuitive descriptions, it is extremely hard to construct Reflected Brownian motions in domains in $\mathbb{R}^n$ for $n\geq 2$ \cite{Burdzy_2013, anderson76}.
Despite this difficulty, reflected Brownian motion has become a great area of research in both pure mathematics and applied mathematics \cite{Burdzy_2013,hittingdensity,Chaigneau_2023}. In particular, its hitting behavior at small targets has grasped the interest of more and more researchers \cite{redner_2001, PhysRevE.105.054107}. Despite there has been some work done in asymptotic hitting behaviors of Brownian motions at small targets \cite{10.3150/20-BEJ1257,doi:10.1137/16M1077659}, the asymptotic hitting behaviors of reflected Brownian Motions at small targets are poorly understood.

In this paper, we will mainly be focused on asymptotic hitting probability for reflected Brownian motion at small targets. The current state of the art in this area is performing explicit computations using connections between reflected Brownian motion and partial differentiate equations, which was done by Denis S. Grebenkov and Adrien Chaigneau in certain domains \cite{Chaigneau_2023}. However, this approach is limited to many computations and could only be done in a few domains where solutions to the Dirichlet problem behave nicely. Instead, in this paper, we observed that the reflected Brownian motion behaves very well in a large class of domains, which we defined to be ``Smooth Uniform Lipchitz Domains" (see Definition Nicedomain), and in this class of domains, the asymptotic behavior hitting probability is comparable to a fundamental solution of Laplacian. The intuition is when the reflected Brownian particle is near the target, the probability that it hits the non-targeted boundary is very small and thus irrelevant to the universal shape of the domain. When the target is very small, the local geometry of the SULD domain near the target is nearly half-space, so the asymptotic hitting probability should agree to the case of a half ball with the target at the bottom planar surface, and computation could be done easily in this case.

Complex analysis, especially conformal maps and harmonic measures, has played a crucial role in studies of planar Brownian motions \cite{lawler08,oksendal1981brownian}. We will also use methods from complex analysis to prove a more accurate result (see Theorem \ref{strongerresult}). Regrettably, this accurate result hasn't been generalized to higher dimensions as the complex analysis method ceases to work.

Although this paper successfully computed the asymptotic behavior of hitting probability in a large class of domains, the asymptotic mean hitting time still remains open except for some elongated domains \cite{PhysRevE.105.054107}. Besides, the asymptotic hitting probability for domains with rough boundaries (e.g. fractal domains) still remains open. The methods in this paper all cease to work for these kind of domains, and even some basic properties of reflected Brownian motion haven't been well-studied for these domains yet.

In this paper, we will construct reflected Brownian motion in Section 2, and show any analytic simply connected domain in $\mathbb{C}$ is SULD. Some technique proofs will be included in Section 7 appendix. In Section 3, we define the target, absorbing boundaries, and state our main results (Theorem \ref{result} and Theorem \ref{strongerresult}). In Section 4, we cite some standard definitions and results in complex analysis. We will prove Theorem \ref{result} and Theorem \ref{strongerresult} using complex analysis tools in $\mathbb{C}$ in Section 5 and prove Theorem \ref{result} in the higher dimensions in Section 6. 

\section{Domains and Construction of Reflected Brownian Motion}
In this section, we construct the reflected Brownian motions (RBM) in certain domains.

\subsection{Construction of RBM on General (SULD) Domains} 
In this subsection, we discuss how to define RBM on any connected domains in $\mathbb{R}^n$ $(n\geq 2)$ with suitable smooth and analytical conditions. 
\begin{definition}
\label{Smoothdomain} Let $\Omega$ be a connected open set of $\mathbb{R}^n$. We say $\Omega$ is a \emph{smooth domain} if at each point $x \in \partial \Omega$ there is an open ball $B_{x}$ centered at $x$ and a smooth bijection $\Phi_x:B_x\to D \subset \mathbb{R}^n$ with smooth inverse such that $\Phi_x(B_x \cap \Omega) \subset \mathbb{R}_{+}^n:=\{(x_1,...,x_n):x_1>0\}$ and $\Phi_x(B_x \cap \partial \Omega) \subset \partial \mathbb{R}_{+}^n$. We call $\Phi_x$ be the coordinate mapping associated with $B_x$.
\end{definition}
There are many equivalent ways to construct RBM. Here we follow the construction by \cite{anderson76}. Constructions in more general domain could be done, for example, in \cite{semimartingale} using Dirichlet form.
We define RBM by explicitly constructing the solution to a stochastic differential equation, which generates RBM in an intuitive sense.
Let $\mathbb{H}_n:=\mathbb{R}^n_+$ be the upper half space in $\mathbb{R}^n$ and $W_t$ be a $n$-dimensional unrestricted standard Brownian motion. Consider the stochastic differential equation in $(X_t^x,\xi_t^x)$:
\begin{equation}\label{eq1}
\mathrm{d} X_t^x= \mathrm{d} W_t+\mathbf{1}_{\partial\mathbb{H}_n}(X_t^{x}) \gamma(X_t^{x}) \mathrm{d} \xi_t^{x}
\end{equation}
 with $X_0^{x}=x$ and $\xi_0^{x}=0$, and $\gamma$ is a constant vector valued function with $\gamma\equiv (1,0,...,0)$. $(X_t^x,\xi_t^x)$ is adapted to the filtration that, with probability $1$, $\xi_t^{x}$ is non-decreasing in $t$ and increases only at set $\Delta:=\{t:X^x_t\in\partial\mathbb{H}_n\}$ and $\Delta$ is a Lebesgue null set. Now we use explicit construction to prove the following proposition
\begin{proposition}
\label{Constructioninupperhs} Under the condition above, the stochastic differential equation \eqref{eq1} has a strong, pathwise unique solution pair $(X_t^x,\xi_t^x)$. We define the reflected Brownian motion in $\mathbb{H}_n$ to be the process $X^x_t$.\cite{anderson76}
\end{proposition}
\begin{proof}
  We only proof its existence here. For uniqueness see \cite{anderson76}. Define a transformation $\Gamma: C\left(\mathbb{R}^n\right) \to C\left(\mathbb{R}_{+}^n\right)$ (the same symbol $\Gamma$ may be used when the parameter set is $[0, T])$ as follows: for $\zeta=\left(\zeta_1, \zeta_2,..., \zeta_n\right) \in C\left(\mathbb{R}_n\right), \eta=\Gamma(\zeta)$ is defined by $\eta=\left(\eta_1, \eta_2,..., \eta_n\right)$, where $\eta_i=\zeta_i$ for $i=2,3, \cdots, n$, and $\eta_1(t)=\zeta_1(t)-\displaystyle\left(\left(\inf _{0 \leq s \leq t} \zeta_1(s)\right) \wedge 0\right)$. Write $\Gamma_t(\zeta)$ for $(\Gamma(\zeta))(t)$. Now define the transformation $\xi: C\left(\mathbb{R}^d\right) \to C(\mathbb{R})$ by $\Gamma(\zeta)-\zeta=(\xi(\zeta), 0, \cdots, 0)$ and write $\xi_t(\zeta)$ for $(\xi(\zeta))(t)$. Then follow the proof of proposition 1 of \cite{anderson76} we have that, if $Y_t^x$ is the standard Brownian motion in $\mathbb{R}^n$, then $X_t^x=\Gamma \circ Y_t^x$ and $\xi_t^x=\xi \circ Y_t^x$ solves \eqref{eq1}  and $X^x_t$ and $\xi^x_t$ satisfy the conditions imposed in connection with \eqref{eq1} . The pair $(X_t^x,\xi_t^x)$ is (pathwise) uniquely determined by \eqref{eq1}  and the associated conditions, i.e., any other pair satisfying \eqref{eq1}  and the associated conditions is equal to $(X_t^x,\xi_t^x)$ for all $t$, with probability one.
\end{proof}
We introduce an equivalent formulation of RBM in $\mathbb{H}_n$.
\begin{proposition}
    \label{Upperhalfplaneequivalent} Let $X^x_t$ be defined in \ref{Constructioninupperhs}. Let $Y^x_t=(Y^x_1(t),Y^x_2(t),...,Y^x_n(t))$ be the standard Brownian motion in $R^n$ such that $Y^x_0=x=(x_1,x_2,...,x_n)\in \mathbb{H}_n$ and each $Y_k^x(t)$ is the independent one-dimensional Brownian motion starts at $x$. Define the process $$Y^x_t=(|Y^x_1(t)|,Y^x_2(t),...,Y^x_n(t))$$ and the associated local time process $$
L_t^x=
\frac{1}{2} \int_0^t \delta\left(Y^x_1(s)\right) \mathrm{d} s
:=|Y^x_1(t)|-x-\int_0^t \operatorname{sign}\left(Y^0_1+x\right) \mathrm{d} Y^0_1$$ where $Y^x_1=Y^0_1+x$ and $Y^0_1$ is a standard Brownian motion on $\mathbb{R}$. Then $(Y^x_t,L^x_t)$ is indistinguishable from $(X_t^x,\xi_t^x)$ constructed in Proposition \ref{Constructioninupperhs}.
\end{proposition}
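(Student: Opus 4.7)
My plan is to reduce the claim to the uniqueness statement of Proposition \ref{Constructioninupperhs}: if I can exhibit, on the probability space supporting $Y^x_t$, a Brownian motion $\tilde W_t$ such that the pair $\bigl(Y^x_t,L^x_t\bigr) = \bigl((|Y^x_1(t)|,Y^x_2(t),\dots,Y^x_n(t)),L^x_t\bigr)$ satisfies the SDE \eqref{eq1} driven by $\tilde W_t$ and the adaptedness/null-set conditions, then uniqueness forces the equality with $(X^x_t,\xi^x_t)$ almost surely.

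The workhorse is Tanaka's formula applied to the first coordinate: for the one-dimensional Brownian motion $Y^x_1(t)$ with $Y^x_1(0)=x_1\geq 0$,
\begin{equation*}
|Y^x_1(t)| = |x_1| + \int_0^t \operatorname{sgn}\!\bigl(Y^x_1(s)\bigr)\,dY^x_1(s) + \ell_t,
\end{equation*}
where $\ell_t$ denotes the semimartingale local time of $Y^x_1$ at $0$. By Lévy's characterization, $\tilde W_1(t) := \int_0^t \operatorname{sgn}(Y^x_1(s))\,dY^x_1(s)$ is a standard one-dimensional Brownian motion, and since the coordinates of $Y^x_t$ are independent, $\tilde W_1$ is independent of $(Y^x_2,\dots,Y^x_n)$. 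Setting $\tilde W_t := (\tilde W_1(t),\,Y^x_2(t)-x_2,\dots,Y^x_n(t)-x_n)$ produces a standard $n$-dimensional Brownian motion, and Tanaka's formula together with $Y^x_i(t)=x_i+\tilde W_i(t)$ for $i\geq 2$ gives exactly
\begin{equation*}
Y^x_t = x + \tilde W_t + \int_0^t \mathbf{1}_{\partial\mathbb H_n}\!\bigl(Y^x_s\bigr)\,\gamma\bigl(Y^x_s\bigr)\,d\ell_s,
\end{equation*}
since $\gamma\equiv(1,0,\dots,0)$ and $\ell$ increases only when $Y^x_1(s)=0$, i.e. on $\{s:Y^x_s\in\partial\mathbb H_n\}$, which is almost surely a Lebesgue null set by the occupation-time formula for one-dimensional Brownian motion.

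It remains to identify $\ell_t$ with $L^x_t=\tfrac12\int_0^t\delta(Y^x_1(s))\,ds$. I would justify this via the standard approximation of Brownian local time: $\ell_t = \lim_{\varepsilon\downarrow 0}\tfrac{1}{2\varepsilon}\int_0^t\mathbf 1_{[-\varepsilon,\varepsilon]}(Y^x_1(s))\,ds$ almost surely, which is exactly the rigorous meaning of the distributional integral defining $L^x_t$. With this identification the pair $(Y^x_t,L^x_t)$ satisfies every requirement placed on $(X^x_t,\xi^x_t)$ in Proposition \ref{Constructioninupperhs}, and uniqueness closes the argument.

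The main obstacle I foresee is matching normalizations: the factor $\tfrac12$ in the definition of $L^x_t$ together with the distributional nature of $\delta(\cdot)$ has to be reconciled with the semimartingale local time produced by Tanaka's formula. This requires invoking (or briefly reproving) the occupation-times formula $\int_0^t f(Y^x_1(s))\,ds = \int_{\mathbb R} f(a)\,L^a_t(Y^x_1)\,da$ and the continuity of $a\mapsto L^a_t$ at $a=0$; once this is in place the rest of the argument is essentially bookkeeping.
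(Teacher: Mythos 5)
Your proof takes essentially the same route as the paper: apply Tanaka/It\^o's formula to $|Y^x_1(t)|$, read off an SDE of the form \eqref{eq1}, and invoke the uniqueness clause of Proposition \ref{Constructioninupperhs}. The paper's version is rather terser and even a bit sloppy --- it writes $\int_0^t\operatorname{sign}(Y^x_1(s))\,dY^x_1(s)=Y^x_1(t)$, which is false as stated and should be understood as renaming the stochastic integral as the driving Brownian motion, exactly what you make explicit via L\'evy's characterization. You also correctly flag the one point the paper glosses over, namely the normalization linking the semimartingale local time from Tanaka's formula to the formal quantity $\tfrac12\int_0^t\delta(Y^x_1(s))\,ds$; resolving that via the occupation-times formula and joint continuity of $(a,t)\mapsto L^a_t$ is the right move. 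In short, same idea, executed with more care.
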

\begin{proof}
   By Tanaka's formula, we have $$|Y_1^x(t)|=|Y_1^0(t)+x|=x+\int_0^t \operatorname{sign}\left(Y^x_1\right) \mathrm{d} Y^0_1+L^x_t=Z^x_1(t)+L^x_t$$ where $Z^x_{1}=x+\int_0^t \operatorname{sign}\left(Y^x_1(s)\right) d Y^0_1(s)$ is a standard Brownian motion starts at $x$ by Levy's Characterization. Thus $(Y^x_t,L^x_t)$ satisfies \eqref{eq1}. As Brownian motion spends almost zero time on any Singleton set we see that $\Delta=\{t:Y^x_1(t)=0\}$ is a Lebesgue null set. So by Proposition \ref{Constructioninupperhs} we see that $(Y^x_t,L^x_t)$ is indistinguishable from $(X_t^x,\xi_t^x)$.
\end{proof}
Now for a general domain $\Omega$ with smooth boundary and satisfies some conditions, we may construct the RBM in the following way. We firstly give the definition of a ``Smooth Uniform Lipchitz domain" (SULD) in any dimension.
\begin{remark}
    By proposition \ref{Upperhalfplaneequivalent} we see the reflected Brownian motion in $\mathbb{H}_n$ is a semi-martingale.
\end{remark}
\begin{definition}\label{Nicedomain}
 We say $\Omega\subset\mathbb{R}^n$ is a SULD if it satisfies the following conditions:
    \begin{enumerate}
        \item $\Omega$ is a connected smooth domain;
        \item  $\Omega$ can be covered by $\mathscr{U}=\left\{U^0, U^1,...\right\}$ which is a countable family of open cover of $\overline{\Omega}$ where $U^0\subseteq\Omega$ is equipped with the original Euclidean coordinate of $\mathbb{R}^n$ which defines $\Omega$ and $\{U^i:i\geq 1\}\subset\{B_x:x\in\partial\Omega\}$ where $B_x$ are defined in Definition \ref{Smoothdomain};
        \item   Let $\Phi_k$ be the coordinate mapping associated with $U^k$ defined in Definition \ref{Smoothdomain}. We require $\Phi_k$ maps normal on $U^k\cap\partial\Omega$ to a vector pointing towards $(1,0,0,...,0)$, which is equivalent to $\nabla \Phi^i_k\cdot \gamma=\delta_{1i}|\nabla\Phi^i_k|$.   
\item We will suppose all $(\Phi_k:k\geq 1)$ satisfies uniform Lipchitz condition in the sense that there is an universal constant $\mathscr{C}$ that $$\frac{|x-y|}{\mathscr C}\leq |\Phi_k(x)-\Phi_k(y)|\leq \mathscr C |x-y|$$
for all $k\geq 1$ and $x,y\in U^k$. We will also require the following conditions: There exists constants $\mathscr C_1, \mathscr C_2, \mathscr C_3$ such that $$\max\left\{\left\lVert \frac{\partial \Phi^i_k}{\partial x_j} \right\rVert_\infty, \lVert\Delta \Phi_k^i\rVert_\infty\right\}\leq \mathscr{C}_1,\quad \max\left\{\mathrm{Lip}\left(\frac{\partial \Phi^i_k}{\partial x_j}\right),\mathrm{Lip}\left(\Delta \Phi_k^i\right)\right\}\leq \mathscr{C}_2.$$
for all $k$, $j$ where $\mathrm{Lip}(f)$ denotes the Lipchitz constant of $f$. Finally if we let $J^k$ be the Jacobian of $\Phi_k$ then $A_k=J_kJ_k^T$ satisfies $$\mathscr{C}^{-1}_3|x|^2\leq x^TA_kx\leq \mathscr{C}_3|x|^2$$ for all $k$, $j$ and $x\in\mathbb{R}^n$.

    \end{enumerate}
\end{definition}
SULD represents a large class of domains as we shall see analytic domains in $\mathbb{C}$ are generally SULD (Proposition \ref{DisSULD}). Now we can construct RBM in SULD following \cite{anderson76}.
\begin{proposition}
  \label{SULDconstruction}  Let $\Omega$ be a SULD, and $\gamma(x)$ be a vector field varies smoothly on the boundary and uniformly bounded away from tangent space of the boundary at $x$. Then the stochastic differential equation \begin{equation}\label{eq1'}
\mathrm{d} X_t^x= \mathrm{d} W_t+\mathbf{1}_{\partial\Omega}(X_t^{x}) \gamma(X_t^{x}) \mathrm{d} \xi_t^{x}
\end{equation} has a pathwise unique strong solution pair $(X_t^x,\xi_t^x)$. In particular when $\gamma$ is the unit normal vector at each $x$, we define the reflected Brownian motion in $\Omega$ to be the corresponding solution process process $X^x_t$.\cite{anderson76}
\end{proposition}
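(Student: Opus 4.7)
The plan is to patch together local solutions constructed in Proposition \ref{Constructioninupperhs} using the cover $\mathscr{U}=\{U^0,U^1,\ldots\}$ guaranteed by the SULD hypothesis. For an initial point $x\in U^0\subseteq\Omega$, the SDE \eqref{eq1'} reduces to an ordinary It\^o SDE with no reflection term, so we simply run a standard Brownian motion $W_t$ starting at $x$ up to the first exit time $\tau_0=\inf\{t:X^x_t\notin U^0\}$. For $x\in U^k\cap\overline{\Omega}$ with $k\geq 1$, we pass to the half-space via the coordinate map $\Phi_k:U^k\to D\subseteq\mathbb{R}^n$ and seek a solution locally inside $\Phi_k(U^k\cap\Omega)\subseteq\mathbb{H}_n$, from which $X^x_t=\Phi_k^{-1}(\tilde X_t)$ gives the solution in $U^k$ up to the first exit time.

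First, I would apply It\^o's formula to each component of $\tilde X_t=\Phi_k(X_t)$, obtaining
\begin{equation*}
d\tilde X^i_t=\sum_{j}\frac{\partial \Phi_k^i}{\partial x_j}(X_t)\,dW^j_t+\tfrac{1}{2}\Delta\Phi_k^i(X_t)\,dt+\mathbf{1}_{\partial\Omega}(X_t)(\nabla\Phi_k^i\cdot\gamma)(X_t)\,d\xi^x_t.
\end{equation*}
By condition (3) of Definition \ref{Nicedomain}, $\nabla\Phi_k^i\cdot\gamma=\delta_{1i}|\nabla\Phi_k^i|$, so the reflection pushes only the first transformed coordinate, exactly as in the half-space SDE \eqref{eq1}. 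Rewriting in terms of $\tilde X_t$ by substituting $X_t=\Phi_k^{-1}(\tilde X_t)$, we obtain a reflected SDE on $\mathbb{H}_n$ whose diffusion matrix is $A_k=J_kJ_k^T$ (uniformly elliptic by condition (4)), whose drift is $\tfrac{1}{2}\Delta\Phi_k\circ\Phi_k^{-1}$ (bounded and Lipschitz by the $\mathscr{C}_1,\mathscr{C}_2$ bounds), and whose reflection coefficient is bounded and Lipschitz. Standard existence/uniqueness for reflected SDEs with Lipschitz coefficients on $\mathbb{H}_n$, which is a direct extension of Proposition \ref{Constructioninupperhs} via the Picard iteration / Itô map $\Gamma$ (see \cite{anderson76}), produces a unique strong solution pair $(\tilde X_t,\tilde\xi_t)$. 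Pulling back via $\Phi_k^{-1}$ yields $(X^x_t,\xi^x_t)$ inside $U^k$ up to the first exit time $\tau_k$.

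Having constructed the solution on each chart, I would concatenate: at time $\tau_k$ the particle lies in some other chart $U^{k'}$, and we restart the construction there with initial point $X^x_{\tau_k}$. The uniform Lipschitz constant $\mathscr{C}$ together with the elliptic bound $\mathscr{C}_3$ provide uniform lower bounds on the expected exit times from each chart, which prevents accumulation of the switching times in finite time and yields a process defined for all $t\geq 0$. Uniqueness follows chart-by-chart: any two solutions agree on each $U^k$ up to $\tau_k$ by the half-space uniqueness, and hence agree globally by induction on the sequence of chart transitions.

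The main obstacle is the verification that the local SDE on $\mathbb{H}_n$ really does fall under a uniqueness theorem in the reflected setting (the classical Yamada--Watanabe / Lipschitz theory must accommodate the singular local time term). This is precisely where the quantitative conditions (bounded $\partial\Phi_k^i/\partial x_j$, $\Delta\Phi_k^i$, their Lipschitz constants, and uniform ellipticity of $A_k$) play their essential role: they reduce the local problem to the setting handled in \cite{anderson76}, so no additional analytic machinery is required beyond what the SULD definition already encodes. A secondary concern is ensuring non-explosion across chart changes, handled by the uniform geometric bounds.
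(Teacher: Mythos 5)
Your proposal follows essentially the same route as the paper: localize via the SULD cover, push the process through $\Phi_k$ with It\^o's formula so that condition (3) of Definition \ref{Nicedomain} turns the reflection vector into a purely first-coordinate push, invoke the half-space existence/uniqueness of Proposition \ref{Constructioninupperhs} (i.e.\ \cite{anderson76}) for the transformed reflected SDE, pull back via $\Phi_k^{-1}$, and concatenate across charts. The only stylistic differences are that the paper runs the local process on a ball $B_x(r)\subset U^{k(x)}$ rather than all of $U^k$, and that you spell out (briefly) the non-explosion and chart-by-chart uniqueness points that the paper leaves implicit or defers to the appendix consistency proposition; these additions are compatible with and mildly strengthen the paper's sketch rather than departing from it.
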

\begin{proof}
 (Existence part) We let $x\in \Omega$ and there exists an non negative integer $k(x)$ such that positive number $r=r(x)$ such that $B_x(r)\subset U^{k(x)}$. We use the coordinate patch of $U^{k(x)}$. We will construct the process in $B_x(r)$ until the time $S_1$ that it leaves $B_x(r)$. Then we will keep constructing until the last time $S_2$ before the process leaves the associated ball around ${X^x_{S_1}}$ and so on. For the first step we have the following cases:
\begin{enumerate}
    \item If $k(x)=0$ only then the process is just the normal Brownian motion as the boundary is not involved;
    \item If we can choose $k(x)>0$, then the coordinate mapping $\Phi_k$ associated with $U^{k(x)}$ changes the problem to a problem in the upper half-space with normal reflection on the boundary. The new process $\Phi_k(X_t)$ satisfies a new stochastic differential equation by it\^o's formula: Write $\Phi_k=f=(f^1,...,f^n)$, then
$$
\begin{aligned}
        \mathrm{d}f^i(X_t)=\left(\sum^n_{k=1}\frac{\partial f^i}{\partial x_k}\mathrm{d}W^k_t\right)+\frac{1}{2}\Delta f^i\mathrm{d}t+\mathbf{1}_{\partial\Phi_k(U_k)\cap\overline{\mathbb{H}}}(f(X_t))\delta_{i1}|\nabla f^i|\mathrm{d} \xi_t
\end{aligned}
$$
where $(W^1_t,...,W^n_t)$ is the standard $n$ dimensional Brownian motion. By our assumption on the domain, this stochastic differential equation has a unique solution by \cite{anderson76} (proposition 1), with local time changed to $$\Xi_t=\int^t_0|\nabla f^1(W_t)|\mathrm{d} \xi_t$$
    \item This problem was solved above, and we obtained a process in the upper half-space with normal reflection. Then $\Phi^{-1}_k$ maps the process in upper half space back to $U^{k(x)}$ which gives us $X_t^{x}, 0 \leq t \leq S_1$. For this process $(1)$ will hold, with $\xi_t^{x}$ being identical to the local time on the boundary for the process in upper half space. Then we just repeat this, and it gives us the desired process.
\end{enumerate}
\end{proof}
However, non-uniqueness choices of $k(x)$ result in the same process. We will have to additionally show the consistency of our definition, which is missed in \cite{anderson76}.
\begin{proposition}
    Suppose $\mathscr{V}=\left\{V^0, V^1,...\right\}$ is another cover of $\Omega$ defined in \ref{Nicedomain} and $(\Psi_k)_{k\geq 1}$ be another collection of associated maps. Suppose for some $k$ and $m$ that $V_m\cap U_k$ is non-empty. Then the transition map $$\Phi_k\circ(\Psi_m)^{-1}: \Psi_m(V_m\cap U_k)\to \Phi_k(V_m\cap U_k)$$ maps process $\Psi_m(X^x_t)$ to the process $\Phi_k(X^x_t)$.
\end{proposition}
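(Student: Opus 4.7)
The plan is to reduce the consistency statement to the pathwise uniqueness of the reflected SDE in the upper half space that is invoked in the proof of Proposition \ref{SULDconstruction}. Set $Y_t:=\Psi_m(X_t^x)$, let $F:=\Phi_k\circ\Psi_m^{-1}$, and put $Z_t:=F(Y_t)$. The smoothness and invertibility of the coordinate charts make $F$ a smooth diffeomorphism between open subsets of $\overline{\mathbb{H}_n}$; moreover, since both $\Phi_k$ and $\Psi_m$ map $\partial\Omega\cap(U_k\cap V_m)$ into $\partial\mathbb{H}_n$, the map $F$ sends $\partial\mathbb{H}_n$ to $\partial\mathbb{H}_n$ on the overlap, and by item (3) of Definition \ref{Nicedomain} its differential carries $e_1$ to a positive multiple of $e_1$ at every boundary point.

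First I would apply It\^o's formula to $Z_t=F(Y_t)$, starting from the SDE satisfied by $Y_t$ that is derived in the proof of Proposition \ref{SULDconstruction}. The martingale part picks up a factor of $DF$, the drift picks up $DF$ applied to the old drift together with the usual $\tfrac12\operatorname{tr}(D^2F\,\cdot\,)$ correction, and the boundary push gets multiplied by $DF$ evaluated at the boundary point. Using the chain-rule identities
\[
D\Phi_k = DF\cdot D\Psi_m,\qquad \Delta\Phi_k^i = \operatorname{tr}\!\bigl(D^2F^i\cdot D\Psi_m(D\Psi_m)^T\bigr) + DF^i\cdot \Delta\Psi_m,
\]
a direct substitution collapses each of these three pieces to exactly the coefficients that the same It\^o computation produces for the $\Phi_k$-construction. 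The initial conditions also match, since $Z_0=F(\Psi_m(x))=\Phi_k(x)$, so the two candidate processes in the upper half space solve the same SDE from the same starting point.

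The step I expect to be the main obstacle is the local-time/boundary term. One has to verify that at a boundary point $DF\cdot(|\nabla\Psi_m^1|\,e_1)$ equals $|\nabla\Phi_k^1|\,e_1$, so that the boundary push of $Z_t$ is $\mathbf{1}_{\partial\mathbb{H}_n}(Z_t)\,|\nabla\Phi_k^1|\,e_1\,d\xi_t^x$ and matches the $\Phi_k$-SDE exactly. This hinges on item (3) of Definition \ref{Nicedomain}, which forces $DF$ restricted to $\partial\mathbb{H}_n$ to be block-triangular with first diagonal entry equal to $|\nabla\Phi_k^1|/|\nabla\Psi_m^1|>0$; this is precisely the factor by which the chain rule rescales the normal component, so the two boundary terms coincide identically. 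Once the two reflected SDEs in the upper half space are seen to agree with identical initial data, the pathwise uniqueness from \cite{anderson76} cited in Proposition \ref{SULDconstruction} forces $Z_t$ to equal the process $\Phi_k(X_t^x)$ produced by the $\Phi_k$-construction, which is the stated identification. A standard localization along the exit-time sequence $S_j$ from the proof of Proposition \ref{SULDconstruction} then promotes this agreement to the whole life of the trajectory inside $U_k\cap V_m$.
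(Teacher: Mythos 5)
Your proposal is correct and follows essentially the same route as the paper: write $Y_t=\Psi_m(X_t^x)$, apply It\^o's formula to $F(Y_t)$ with $F=\Phi_k\circ\Psi_m^{-1}$, and use the chain-rule identities for $D\Phi_k$ and $\Delta\Phi_k^i$ together with item (3) of Definition \ref{Nicedomain} to show the martingale, drift, and local-time terms collapse to exactly the coefficients of the $\Phi_k$-chart SDE. The paper phrases the conclusion as a direct pathwise identity of differentials rather than appealing to pathwise uniqueness, but the computation is the same one you describe.
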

\begin{proof}
   See Appendix, \ref{tech1}.
\end{proof}
We give another equivalent construction of RBM in analytic simply connected domains in $\mathbb{C}$ as a special case in order to apply tools in complex analysis to prove a stronger result.

\subsection{Equivalent Construction of RBM in Domains in $\mathbb{C}$}

\begin{definition}
\label{analyticdomainC} Let $\Omega\subsetneq\mathbb{C}$ be a domain in the sense that it is open and connected. Let $\partial\Omega:=\bar{\Omega}\setminus\mathring{\Omega}$. We say $\Omega$ is \emph{analytic} if there is an univalent function $f$ analytic near $\partial\mathbb{D}$ such that $\partial\Omega$ is the image of $\partial\mathbb{D}$ under $f$. We say $\Omega$ is an analytic simply connected domain if it is additionally simply connected.
\end{definition}
One equivalent construction of the RBM in the analytic domain is using conformal maps. We define RBM in the upper half plane $\mathbb{H}$ then define the RBM in general analytic simply connected domain $\Omega$ by conformal maps, where Riemann mapping theorem ensures there is a conformal map between $\mathbb{H}$ and $\Omega$. 
\begin{definition}
   \label{Constructionupperhalfplane} Let $(X_t,Y_t)$ be the standard Brownian motion in $\mathbb{R}^2$ where $X_t, Y_t$ are independent standard Brownian motion in $\mathbb{R}$. Define another stochastic process $B_t=(X_t,|Y_t|)$ as the (complex) \emph{reflected Brownian motion} in $\mathbb{H}$. Identify $\mathbb{C}$ with $\mathbb{R}^2$ our reflected Brownian motion in $\mathbb{H}$ is $B_t=X_t+i|Y_t|$. We write $B^z_t$ to denote the reflected Brownian motion starting at $z$. We also note the process $X_t+iY_t$ is referred to the \emph{complex Brownian motion} by \cite{lawler08}.
\end{definition}
By Proposition \ref{Upperhalfplaneequivalent}, we see the stochastic process $B_t$ is indistinguishable from the RBM in $\mathbb{H}$ constructed in proof of Proposition \ref{Constructioninupperhs}. We now construct RBM in unit disk $\mathbb{D}:=\{z\in\mathbb{C}:|z|<1\}$.
\begin{proposition}
    $\mathbb{D}$ is a SULD. Moreover, any domain $\Omega$ defined in Definition \ref{analyticdomainC} is a SULD.
    \label{DisSULD}
\end{proposition}
\begin{proof}
   See Appendix, \ref{tech2}.
\end{proof}
   By the above proof and Proposition \ref{SULDconstruction}, we can construct the RBM in $\mathbb{D}$. Alternatively, we have another equivalent construction of RBM in $\mathbb{D}$. We firstly prove a powerful characterization of conformal invariance of RBM in $\mathbb{H}$.
\begin{proposition}
    Suppose $\Omega$ and $\Omega^\prime$ are two simply connected open subset of $\mathbb{H}$ and $B^{z_0}_t$ is the RBM in $\mathbb{H}$ where $z_0\in\Omega$ with the associated local time process $\xi^{z_0}_t$. Let $f$ be a conformal automorphism of $\mathbb{H}$. Then the process $f(B^{z_0}_t)$ is indistinguishable from a time-changed RBM in $\mathbb{H}$.
\end{proposition}
\begin{proof}
    See Appendix, \ref{tech3}.
\end{proof}
By examine above proof and notice that any conformal map $f=u+iv$ from $\Omega$ to $\mathbb{H}$ that can be extended conformally between a neighborhood of $\Omega$ and $\mathbb{H}$ with a finite number of singularities (as we may delete them as the probability that the complex Brownian motion hit them is 0) on $\partial\Omega$, we have a stronger proposition:
\begin{proposition}
    Let $f$ be a conformal map between $\Omega$ and $\mathbb{H}$. If $f$ can be extended conformally between a neighborhood $\Omega^\prime$ of $\Omega$ and $\mathbb{H}^\prime$ of $\mathbb{H}$ with finite number of singularities on $\partial\Omega$, then $f$ maps the complex RBM in $\Omega$ to a time-changed complex RBM in $\mathbb{H}$.
    \label{timechangeOmegatoH}
\end{proposition}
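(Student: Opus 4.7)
The plan is to reduce to the previous proposition (the conformal invariance statement for automorphisms of $\mathbb{H}$) by a localization argument, handling the singular set on the boundary as a polar exceptional set. Since $\Omega$ is an analytic simply connected domain in $\mathbb{C}$, Proposition \ref{DisSULD} shows it is SULD, so the RBM $B^z_t$ in $\Omega$ with its local time $\xi^z_t$ is well-defined by Proposition \ref{SULDconstruction}. Let $S=\{p_1,\ldots,p_N\}\subset\partial\Omega$ denote the finite singular set on which $f$ fails to extend conformally. A first step is to observe that $S$ is polar for $B^z_t$: the extended $f$ carries $\partial\Omega\setminus S$ to $\partial\mathbb{H}$, and a single point on $\partial\mathbb{H}$ has zero harmonic measure for RBM in $\mathbb{H}$ (this reduces to the fact that $1$-dimensional Brownian motion does not hit a fixed point), and conformal maps preserve harmonic measure. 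Hence with probability one we may work on the open set where $f$ is a conformal diffeomorphism.

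Next I would localize on a neighborhood $V_x$ of each regular point $x\in\partial\Omega\setminus S$ on which $f$ extends conformally, together with the interior set $\Omega$ itself, obtaining an open cover adapted to the SULD atlas. On each such $V_x$, I repeat the argument of the previous proposition: applying It\^o's formula to $f(B^z_t)$ and using that the real and imaginary parts of $f$ are harmonic, the It\^o correction in the interior vanishes, leaving a time-changed $2$-dimensional Brownian motion with time change $\int_0^t|f'(B^z_s)|^2\,ds$. At the boundary, conformality of $f$ at $x$ means that $df_x$ carries the inward unit normal $\gamma(x)$ to the positive scalar multiple $|f'(x)|\gamma(f(x))$ of the inward unit normal at $f(x)\in\partial\mathbb{H}$, so the reflection term $\mathbf{1}_{\partial\Omega}(B^z_t)\gamma(B^z_t)\,d\xi^z_t$ in \eqref{eq1'} transforms into $\mathbf{1}_{\partial\mathbb{H}}(f(B^z_t))\,\gamma(f(B^z_t))\,d\Xi_t$ with $d\Xi_t=|f'(B^z_t)|\,d\xi^z_t$. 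This is exactly the defining SDE for RBM in $\mathbb{H}$ at $f(z)$, run on the time scale $\int_0^t|f'(B^z_s)|^2\,ds$.

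Finally, I would patch the local pieces together using the uniqueness assertion of Proposition \ref{SULDconstruction}, applied to the image process in $\mathbb{H}$, and combine with the polarity of $S$ to extend the identification to all $t\ge 0$. The main obstacle is the boundary/local-time analysis: one must verify cleanly that the reflection direction and the local time transform as claimed under $f$, which is where conformality (angle preservation at the boundary) is essential and where the It\^o--Tanaka calculus must be applied with care in the local coordinate charts provided by the SULD structure. The singularities are a technical nuisance but are easily removed by the polarity observation above, and the fact that the complex Brownian motion avoids any fixed finite set with probability one.
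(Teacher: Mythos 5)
Your proposal follows essentially the same route the paper intends: the paper gives no displayed proof for this proposition, but the preceding sentence explicitly instructs the reader to ``examine above proof'' (i.e.\ Proposition~\ref{tech3}) and to delete the singular set because ``the probability that the complex Brownian motion hit them is $0$.'' You correctly identify the two ingredients: re-run the It\^o--Tanaka computation of Proposition~\ref{tech3} locally where $f$ is conformal, verify that the reflection term transforms via $d\Xi_t=|f'(B^z_t)|\,d\xi^z_t$ because $df$ carries the inward normal to a positive multiple of the inward normal, and invoke Dubins--Schwarz (or uniqueness) for the time change; then dispose of the singular set by polarity. One should also note, as you implicitly do, that the constraint $f'>0$ on $\mathbb{R}$ used in Proposition~\ref{tech3} for M\"obius automorphisms is replaced in the general case by the modulus $|f'|$, which is the correct local-time density by conformality.

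The one genuine slip is your parenthetical justification of polarity: you write that it ``reduces to the fact that $1$-dimensional Brownian motion does not hit a fixed point,'' but one-dimensional Brownian motion \emph{does} hit every fixed point almost surely (it is point recurrent). The correct fact, and the one the paper itself appeals to, is that the underlying two-dimensional (complex) Brownian motion $(X_t,Y_t)$ avoids any fixed point other than its starting point almost surely; since the RBM $(X_t,|Y_t|)$ visits $(x_0,0)$ if and only if $(X_t,Y_t)$ visits $(x_0,0)$ or $(x_0,0)$ (the same point), the RBM avoids any finite boundary set a.s. Replace the $1$-dimensional claim with the $2$-dimensional one and the argument is sound. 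The rest of the structure — localization via the SULD atlas, the statement that conformality sends $\gamma(x)$ to $|f'(x)|\gamma(f(x))$, and the patching by uniqueness — is consistent with what the paper sketches.
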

It is easy to deduce that
\begin{proposition}
    Let $B^z_t$ be defined in Definition \ref{Constructionupperhalfplane}. Let $\displaystyle{f(z)=i\frac{z-i}{z+i}}$. Then the process $f(B^z_t)$ is a time-changed RBM started at $f(z)$. In particular, $f(B^z_t)$ is distinguishable from the time-changed complex RBM in $\mathbb{D}$ constructed by Proposition \ref{SULDconstruction}.
\end{proposition}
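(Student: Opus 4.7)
The plan is to invoke Proposition \ref{timechangeOmegatoH} with the conformal map $g=f^{-1}\colon\mathbb{D}\to\mathbb{H}$ (rather than with $f$ itself) and then invert the resulting time change. The map $f(z)=i(z-i)/(z+i)$ is the standard Cayley bijection from $\mathbb{H}$ onto $\mathbb{D}$, carrying $\mathbb{R}\cup\{\infty\}$ onto $\partial\mathbb{D}$, and its inverse $f^{-1}(w)=(1-iw)/(w-i)$ is holomorphic on $\widehat{\mathbb{C}}\setminus\{i\}$. Hence $f^{-1}$ extends conformally to a full neighborhood of $\overline{\mathbb{D}}$ save for the single boundary point $w=i$, which meets the ``finite boundary singularities'' hypothesis of Proposition \ref{timechangeOmegatoH} with $\Omega=\mathbb{D}$.

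By Proposition \ref{DisSULD}, $\mathbb{D}$ is a SULD, so Proposition \ref{SULDconstruction} supplies the RBM $X^{f(z)}_t$ on $\mathbb{D}$ with normal reflection on $\partial\mathbb{D}$ started from $f(z)$. Applying Proposition \ref{timechangeOmegatoH} to $f^{-1}$ produces a continuous non-decreasing time change $\tau(t)=\int_0^t\lvert (f^{-1})'(X^{f(z)}_u)\rvert^2\,du$ such that $f^{-1}(X^{f(z)}_t)=B^{z}_{\tau(t)}$, where $B^z_s$ is the $\mathbb{H}$-RBM of Definition \ref{Constructionupperhalfplane}. Applying $f$ to both sides and relabelling by $\sigma:=\tau^{-1}$ then gives $f(B^z_s)=X^{f(z)}_{\sigma(s)}$, which is precisely the identification of $f(B^z_\cdot)$ with the SULD-constructed RBM in $\mathbb{D}$ up to a suitable time change.

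The main obstacle is to ensure that $\tau$ is strictly increasing so that $\sigma=\tau^{-1}$ is a well defined continuous reparametrization. Since $(f^{-1})'$ is non-vanishing on $\mathbb{D}\setminus\{i\}$ and $X^{f(z)}$ visits $\{i\}$ on a Lebesgue-null set of times almost surely (points are polar for planar Brownian motion, and the reflected process inherits this off the reflecting boundary via the local construction of Proposition \ref{SULDconstruction}), $\tau$ is absolutely continuous with strictly positive density almost everywhere, and hence strictly increasing. The starting points match trivially, $f(B^z_0)=f(z)=X^{f(z)}_0$, and the reflection direction transports correctly because the Cayley map preserves outward normals on $\partial\mathbb{D}$ up to the positive conformal factor already absorbed into the time change; these observations combined with the uniqueness asserted in Proposition \ref{SULDconstruction} close the argument.
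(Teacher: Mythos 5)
Your proof is correct and reaches the same conclusion by a genuinely different route. The paper argues \emph{forward and locally}: it pushes $B^z_t$ through $f$ into $\mathbb{D}$, observes that in each SULD coordinate chart $\Phi_k=(f_j)^{-1}$ the pushed process reads $((f_j)^{-1}\circ f)(B^z_t)$, and then notes that $(f_j)^{-1}\circ f$ is a conformal automorphism of $\mathbb{H}$ (a rotation of $\mathbb{D}$ conjugated by the Cayley map), so by the $\mathbb{H}$-automorphism invariance result each local coordinate representation is a time-changed $\mathbb{H}$-RBM --- which is exactly how the SULD RBM on $\mathbb{D}$ is built patch by patch. You instead argue \emph{backward and globally}: you start from the SULD process $X^{f(z)}_t$ on $\mathbb{D}$, pull it to $\mathbb{H}$ through $f^{-1}$ via Proposition \ref{timechangeOmegatoH}, identify the resulting process with $B^z_\cdot$ up to a time change, and then invert. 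Your route avoids unwinding the patch structure entirely and is cleaner, at the cost of leaning on the more general (and only sketched-in-the-paper) Proposition \ref{timechangeOmegatoH} applied with $\Omega=\mathbb{D}$; the paper's route only needs the proved automorphism case (tech3). Your added remark about strict monotonicity of $\tau$ is a genuine point that the paper leaves implicit and is worth keeping.

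One small caution: the equality $f^{-1}(X^{f(z)}_t)=B^z_{\tau(t)}$ should be read as an identity in law (or as an identification after a coupling), since what Proposition \ref{timechangeOmegatoH} actually produces is \emph{some} time-changed $\mathbb{H}$-RBM started at $z$, which then agrees with $B^z_\cdot$ only after invoking uniqueness of the $\mathbb{H}$-RBM from Proposition \ref{Constructioninupperhs}. The paper is equally informal on this point, so this is a matter of phrasing rather than a gap.
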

By the above arguments, we can now give an equivalent definition of the complex RBM via conformal mapping:
\begin{definition}
    Suppose $B_t$ is the complex RBM in $\mathbb{D}$ and let $\Omega$ be an analytic simply connected domain. Let $g$ be the associated conformal map from $\Omega$ to $\mathbb{D}$ above. Let $$\sigma(t)=\inf \left\{s \geq 0: \int_0^s\left|\left(g^{-1}\right)^{\prime}\left(B_u\right)\right|^2 d u>t\right\}$$ Then we define the process $\tilde{B_t}=g^{-1}(B_{\sigma(t)})$ to be the (complex) RBM in $\Omega$.
\end{definition}

We would like to finish this section by the following key proposition, which is a generalization of a proposition \ref{timechangeOmegatoH}. Its proof is also similar to the proof we did above.
\begin{proposition}
    Let $\Omega_1$ and $\Omega_2$ be two analytic simply connected domains. By Riemann mapping theorem, there is a conformal map $f$ between $\Omega_1$ and $\Omega_2$ which could also be extended to a conformal map between the open neighborhoods of $\overline{\Omega_1}$ and $\overline{\Omega_2}$. Let $B_t$ be the complex RBM in $\Omega_1$, then $f(B_t)$ is the time-changed complex RBM in $\Omega_2$, starts at $f(z_0)$.
    \label{timechangegeneraldomain}
\end{proposition}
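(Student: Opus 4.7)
The plan is to reduce the statement to the case already handled in Proposition \ref{timechangeOmegatoH} by routing everything through the unit disk $\mathbb{D}$ (or equivalently $\mathbb{H}$), which is the model domain on which RBM in analytic simply connected domains was \emph{defined}. Fix conformal maps $g_i \colon \Omega_i \to \mathbb{D}$ for $i=1,2$ of the type used in the definition of the RBM on analytic simply connected domains. Because $\Omega_1, \Omega_2$ are analytic, Lemma \ref{analyticextensionlemma} guarantees that each $g_i$ extends to a conformal map between open neighborhoods of $\overline{\Omega_i}$ and $\overline{\mathbb{D}}$, with only finitely many singularities on the boundary (a null set for complex Brownian motion and hence ignorable). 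By the very definition of the complex RBM on $\Omega_1$, we have $g_1(B_t) = \widetilde B_t$, where $\widetilde B_t$ is the complex RBM in $\mathbb{D}$ started at $g_1(z_0)$.

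Next, form the composition $h := g_2 \circ f \circ g_1^{-1}$. Since $f$ extends conformally across $\partial \Omega_1$ by hypothesis and the $g_i$ extend across $\partial \mathbb{D}$, the map $h$ is a conformal automorphism of $\mathbb{D}$ that extends conformally to a neighborhood of $\overline{\mathbb{D}}$ (with at most finitely many removable singularities on $\partial \mathbb{D}$). Conjugating by the Cayley transform $z\mapsto i(z-i)/(z+i)$ turns $h$ into a conformal automorphism of $\mathbb{H}$ extending across $\partial\mathbb{H}$, so Proposition \ref{timechangeOmegatoH} applies: it shows that $h$ maps the complex RBM in $\mathbb{D}$ to a time-changed complex RBM in $\mathbb{D}$. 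Therefore $h(\widetilde B_t)$ is a time-changed complex RBM in $\mathbb{D}$ started at $h(g_1(z_0)) = g_2(f(z_0))$.

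Finally, by the definition of the complex RBM in $\Omega_2$, the process $g_2^{-1}$ applied to any (time-changed) complex RBM in $\mathbb{D}$ is a (time-changed) complex RBM in $\Omega_2$. Hence
\[
f(B_t) \;=\; g_2^{-1}\bigl(h(\widetilde B_t)\bigr)
\]
is a time-changed complex RBM in $\Omega_2$, started at $f(z_0)$, which is exactly what was to be shown.

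The main technical point — and really the only place where care is needed — is bookkeeping the composition of three time changes (from $\Omega_1\to\mathbb{D}$, the automorphism of $\mathbb{D}$, and from $\mathbb{D}\to\Omega_2$) into a single continuous strictly increasing clock. Each step produces a time change of Dambis--Dubins--Schwarz type, given by an integral of $|{\cdot}'|^2$ along the path, and composition of such integrals is again of the same form, so the combined time change is well defined and strictly increasing away from the boundary. A small side verification is that the extension of $f$ across $\partial\Omega_1$ combines with the extensions of $g_1, g_2$ to produce a genuine conformal extension of $h$ across $\partial\mathbb{D}$; this is immediate from the fact that compositions of analytic maps are analytic, so it requires no additional input.
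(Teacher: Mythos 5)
Your proof is correct, and it fills in details that the paper omits entirely: the paper only says the proof is ``similar to the proof we did above,'' without spelling out what that means. Your reduction --- writing $f = g_2^{-1}\circ h\circ g_1$ with $h = g_2\circ f\circ g_1^{-1}$ an automorphism of $\mathbb{D}$, then invoking the definition of RBM on $\Omega_i$ via $g_i$ and the automorphism time-change result (Proposition \ref{timechangeOmegatoH} after a Cayley conjugation) --- is exactly the composition-through-the-model-domain argument that the surrounding text gestures at, made explicit and careful. Your observation that the three Dambis--Dubins--Schwarz clocks compose into a single strictly increasing time change is the right technical point to flag; one could also note that an automorphism of $\mathbb{D}$ is a M\"obius map whose pole lies outside $\overline{\mathbb{D}}$, so $h$ extends across $\partial\mathbb{D}$ without any singularities at all, which slightly simplifies that step.
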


\section{Main Result}
In this section, we define the hitting target, absorbing boundary, and reflecting boundaries. We will also state our main result.
\subsection{Description of Hitting Target}

\begin{definition}
\label{abstgandref} (Absorbing Boundary, Target and Reflecting Boundary) Let $\Omega$ be a SULD in $\mathcal{S}=\mathbb{R}^n$ or $\mathcal{S}=\mathbb{C}$. Let $P$ be a point on $\partial\Omega$. Let $R>0$ be a fixed constant that $\left(\mathcal{S}\setminus \overline{B_R(P)}\right)\cap\partial\Omega$ is non-empty, which will be denoted by $\mathrm{abs}$. 

Let $\epsilon>0$ be a very small constant ($\epsilon=o(1)$) and then our \emph{target } (which is also absorbing) is $B_\epsilon(P)\cap\partial\Omega$, which will be denoted by $\mathrm{tg}$.

Let our \emph{reflecting boundary} be $\partial\Omega\setminus(\mathrm{abs}\cup\mathrm{tg})$ which will be denoted by $\mathrm{ref}$.

See Figure \ref{target} for an example.

Throughout the paper, we assume there is a path inside $\Omega$ that connects a point in $\mathrm{tg}$ and $\mathrm{abs}$.
\end{definition}
\begin{figure}[htbp]
\begin{center}
    \includegraphics[scale=0.3]{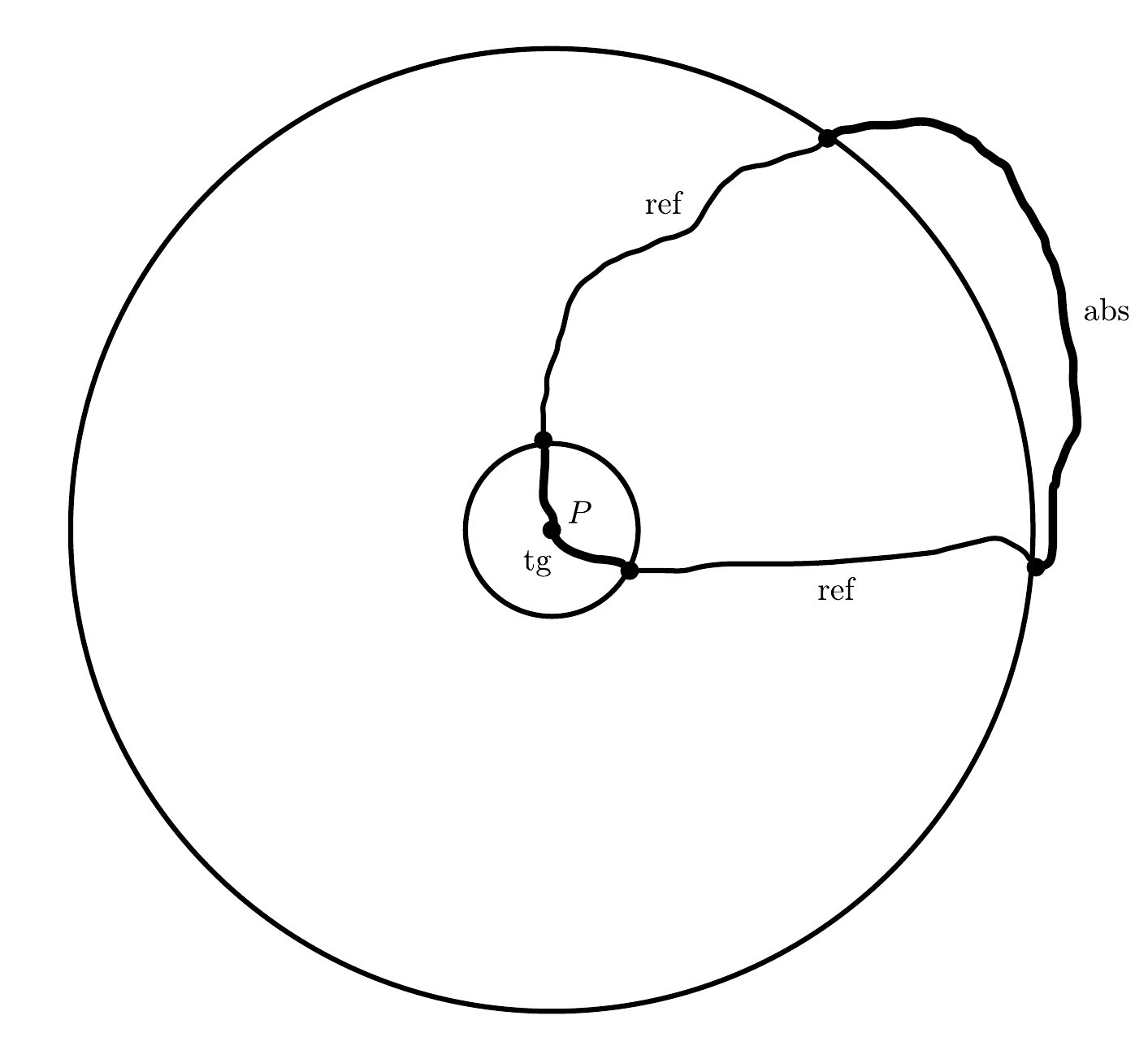}
\end{center}
\caption{Absorbing Boundary, Reflecting Boundary and Target}
\label{target}
\end{figure}
We now give the definition of absorbing boundaries.
\begin{definition}
    Let $B^{x_0}_t$ be the reflected Brownian Motion in $\Omega$. Let $\tau=\inf\{t\geq 0: B^{x_0}_t\in\mathrm{abs}\cup\mathrm{tg}\}$ be the stopping time that RBM hits the absorbing boundary or target. We consider the stopped process $B^{x_0}_{\tau\wedge t}$ to be the stopped RBM associated with the boundaries. In the later texts, we use $B^{x_0}_t$ to denote $B^{x_0}_{\tau\wedge t}$ if there is no ambiguity.
\end{definition}

\subsection{Main Result}
The following theorem is our main result:
\begin{theorem}
    Let $\Omega$ be a SULD in $\mathbb{R}^n$ (identify $\mathbb{C}$ as $\mathbb{R}^2$) and $P$ be a point on $\partial\Omega$, with the target, reflecting boundary and absorbing boundary defined above in Definition \ref{abstgandref}. Let $(B^{x_0}_t,t>0)$ be the reflected Brownian Motion starting at $x_0\in\Omega$, and $\tau=\inf\{t\geq0:B^{x_0}_t\in \mathrm{abs}\}$. Define $$
G_{n,y}(x)= \begin{cases}-\frac{1}{(n-2) \omega_{n-1}}|x-y|^{2-n} & \text { if } n \in \mathbb{N} \backslash\{2\} \\ \frac{1}{\omega_1} \log |x-y| & \text { if } n=2\end{cases}
$$ be the generalized Newtonian potential, then there exists non-zero, positive constants $C_1, C_2$ determined by $\Omega$ and $z_0$ such that $$\frac{C_1}{G_{n,0}(\bm{\epsilon})}\leq\mathbb{P}(B^{x_0}_\tau\in\mathrm{tg})\leq \frac{C_2}{G_{n,0}(\bm{\epsilon})}$$ where $\bm{\epsilon}=(\epsilon,0,0...,0)\in\mathbb{R}^n$, for all $\epsilon$ small enough.
\label{result}
\end{theorem}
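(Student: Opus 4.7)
The plan is to separate behavior near the target from behavior away from it, exploiting the SULD structure to reduce the local problem to one for reflected Brownian motion in a half-space. Writing $u(x) = \mathbb{P}^{x}(B_\tau \in \mathrm{tg})$, the PDE interpretation is that $u$ is harmonic on $\Omega$ with $u = 1$ on $\mathrm{tg}$, $u = 0$ on $\mathrm{abs}$, and vanishing normal derivative on $\mathrm{ref}$, so the theorem reduces to two-sided estimates on $u(x_0)$.

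First I would localize. Pick $r$ with $\epsilon \ll r \ll \min\{R, |x_0 - P|\}$ and small enough that $B_r(P) \subset U^{k(P)}$; then $B_r(P) \cap \partial\Omega \subseteq \mathrm{ref} \cup \mathrm{tg}$. Let $\Sigma = \Omega \cap \partial B_r(P)$ and let $\sigma$ be the first hitting time of $\Sigma$. Since $\mathrm{tg} \subset B_r(P)$, $x_0 \notin \overline{B_r(P)}$, and $\Sigma \cap \mathrm{tg} = \emptyset$, the event $\{B_\tau \in \mathrm{tg}\}$ is contained in $\{\sigma < \tau\}$, so by the strong Markov property
\[
u(x_0) \;=\; \mathbb{E}^{x_0}\!\bigl[u(B_\sigma)\,\mathbf{1}_{\sigma<\tau}\bigr].
\]
A Harnack-type argument on the \emph{$\epsilon$-independent} region $\Omega\setminus B_{r/2}(P)$, using the path-connectedness hypothesis linking $x_0$ to $\mathrm{tg}$, yields constants $0 < c_1(x_0,\Omega) \le c_2(x_0,\Omega) < \infty$ with
\[
c_1 \inf_{y\in\Sigma} u(y) \;\le\; u(x_0) \;\le\; c_2 \sup_{y\in\Sigma} u(y),
\]
so everything reduces to showing $u(y) \asymp 1/|G_{n,0}(\bm\epsilon)|$ uniformly for $y \in \Sigma$.

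For the inner problem I apply the coordinate chart $\Phi = \Phi_{k(P)}$. By the It\^o calculation already performed inside the proof of Proposition \ref{SULDconstruction}, the image $\tilde B_t = \Phi(B_t)$, run up to its exit from the half-ball $\Phi(B_r(P)\cap\Omega)$, is a uniformly elliptic diffusion with generator $\tfrac12\sum_{ij}a_{ij}\partial_{ij} + \sum_i b_i\partial_i$ and \emph{normal} reflection on $\partial\mathbb{H}_n$, thanks to condition (3) of Definition \ref{Nicedomain}. Condition (4) guarantees that $a_{ij}, b_i$ are uniformly bounded, uniformly Lipschitz, and that $(a_{ij})$ is uniformly elliptic, all with constants independent of $\epsilon$. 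The image $\widetilde{\mathrm{tg}} := \Phi(\mathrm{tg})$ is sandwiched between two discs on $\partial\mathbb{H}_n$ of radii $\epsilon/\mathscr C$ and $\mathscr C\epsilon$ around $\Phi(P)$. The model problem is then standard RBM in $\mathbb{H}_n$ hitting a disc of radius $\asymp\epsilon$ before exiting a fixed-size half-ball; by the method of images this equals the probability that standard Brownian motion in $B_r \subset \mathbb{R}^n$ hits the centered ball of radius $\asymp\epsilon$, which is $\asymp 1/|G_{n,0}(\bm\epsilon)|$ (namely $\asymp\epsilon^{n-2}$ for $n\ge3$ and $\asymp 1/|\log\epsilon|$ for $n=2$).

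The main obstacle is transferring this benchmark asymptotic from standard RBM to the perturbed diffusion $\tilde B_t$. I would do this by invoking the boundary Harnack principle for uniformly elliptic operators with mixed Neumann/Dirichlet boundary data (or equivalently, Aronson-type two-sided heat-kernel bounds for uniformly elliptic reflected diffusions), which ensure that the harmonic measure of $\widetilde{\mathrm{tg}}$ seen from $\Phi(\Sigma)$ under $L$ is comparable, up to constants depending only on $\mathscr C, \mathscr C_1, \mathscr C_2, \mathscr C_3$, to the same harmonic measure for the Laplacian. A more elementary route is to construct explicit barriers of the form $v_\pm(\tilde x) = \alpha_\pm(|\tilde x - \Phi(P)|^{2-n} - r^{2-n})/(\epsilon^{2-n} - r^{2-n})$ (with $\log$ replacing the power when $n=2$) and check by direct computation, using the SULD bounds, that $L v_\pm$ is a lower-order correction of $\tfrac12\Delta v_\pm = 0$ at the scale of the target. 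Pulling the resulting estimate back through $\Phi^{-1}$ and feeding it into the Step~1 decomposition finishes the proof. The delicate point, and the reason the SULD hypotheses are built up as strongly as in Definition \ref{Nicedomain}(4), is precisely to prevent the perturbative terms $\sum_i b_i\partial_i$ and $\sum_{ij}(a_{ij}-\delta_{ij})\partial_{ij}$ from contributing an $\epsilon$-dependent multiplicative factor to the capacity of the target rather than a bounded one.
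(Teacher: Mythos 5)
Your outline runs structurally parallel to the paper's: localize near $P$ inside a single chart $U^1$, cut at an $\epsilon$-independent surface, apply the strong Markov property there, and reduce to a small-target hitting estimate in a half-ball of radius $r$ in chart coordinates. The paper realizes the cut with two nested surfaces $\mathscr C_1,\mathscr C_2$ (Proposition in \S6.1) where you use $\Sigma=\Omega\cap\partial B_r(P)$ plus an interior Harnack comparison; both are fine. For the inner estimate the paper then computes the exact annular harmonic measure $u_i(y_0)=\dfrac{G_{n,P}(y_0)-G_{n,P}(P+\bm r_1)}{G_{n,P}(P+\bm{k_i\epsilon})-G_{n,P}(P+\bm r_1)}$ of the sphere $\partial B_P(k_i\epsilon)$ for \emph{standard Brownian motion} in the chart ball, and supplements it with a direct geometric lower bound (inscribed cubes) on the last leg from $\partial B_P(k_1\epsilon)$ to $\mathrm{tg}$; you replace this with a comparison to the model capacity via barriers or elliptic-measure comparability.

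The genuinely different feature of your proposal, and to your credit, is that you explicitly observe that $\tilde B_t=\Phi(B_t)$ is not standard RBM for $n\ge3$ but a variable-coefficient diffusion with generator $\tfrac12\sum a_{ij}\partial_{ij}+\sum b_i\partial_i$; the paper's \S6.2 carries out the annulus computation as if the image in the chart were Brownian (``we may consider the original RBM satisfies (1)\dots in $\Phi_1(U^1)$''), which is accurate in $n=2$ (a conformal chart is merely a time change) but not in $n\ge 3$. Flagging that transfer as the ``main obstacle'' is the right call. However, your proposed barrier does not actually close it. With $v_\pm(\tilde x)=\alpha_\pm(|\tilde x-\Phi(P)|^{2-n}-r^{2-n})/(\epsilon^{2-n}-r^{2-n})$, the term $\tfrac12(a_{ij}(\Phi(P))-\delta_{ij})\partial_{ij}v_\pm$ is of size $|\tilde x-\Phi(P)|^{-n}$, the \emph{same} order as $\Delta v_\pm$, not a lower-order correction: nothing in Definition \ref{Nicedomain} forces $a_{ij}(\Phi(P))=\delta_{ij}$ (condition (3) only aligns the normal with $e_1$, and condition (4) gives uniform ellipticity and Lipschitz bounds, not $J_1(P)\in O(n)$). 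To salvage the barrier route you must anisotropize: take $v_\pm$ as a radial function of $(\tilde x-\Phi(P))^TA_0^{-1}(\tilde x-\Phi(P))$ with $A_0=(a_{ij}(\Phi(P)))$, so that the residual $(a_{ij}-(A_0)_{ij})$ is $O(|\tilde x-\Phi(P)|)$ by the Lipschitz hypothesis and the correction genuinely drops an order; the Neumann condition on $\mathbb R_0^{n-1}$ then survives because $A_0e_1\parallel e_1$ by condition (3). Separately, ``boundary Harnack principle'' is not the right name for what you need here, since it compares two positive solutions of the \emph{same} operator; the tool that carries the comparison with the Laplacian benchmark is the Green-function (or elliptic-measure) comparability for uniformly elliptic operators, i.e.\ the Aronson-type two-sided bounds you mention parenthetically, and that should be the primary citation.
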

We have a stronger result, which confirms the asymptotic behavior of $\mathbb{P}(B^{z_0}_\tau\in\mathrm{tg})$ for certain domains.
\begin{theorem}
    Let $\Omega$ be an analytic simply connected domain in $\mathbb{C}$. If, in addition, there is only one component of absorbing boundary, then there exists a constant $C(\Omega,z_0)$ that depends on $\Omega$ and starting point $z_0$ such that $$\lim_{\epsilon\downarrow0}\mathbb{P}(B^{z_0}_\tau\in\mathrm{tg})\log\epsilon=C(\Omega,z_0).$$
    \label{strongerresult}
\end{theorem}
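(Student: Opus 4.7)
The plan is to exploit conformal invariance of reflected Brownian motion (Proposition \ref{timechangegeneraldomain}) to transfer the hitting probability to an explicit mixed Dirichlet--Neumann problem on the upper half plane $\mathbb{H}$, and then solve that problem via a Schwarz--Christoffel map onto a rectangle whose dimensions have precise logarithmic asymptotics in $\epsilon$. Since $\Omega$ is analytic and simply connected, choose a Riemann map $f:\Omega\to\mathbb{H}$ extending conformally across $\partial\Omega$. By Proposition \ref{timechangegeneraldomain}, $f(B^{z_0}_t)$ is a time-changed RBM in $\mathbb{H}$, and time change does not affect the hitting distribution on $\partial\mathbb{H}$, so $\mathbb{P}(B^{z_0}_\tau\in\mathrm{tg})=\mathbb{P}(\widetilde B^{f(z_0)}_{\widetilde\tau}\in f(\mathrm{tg}))$. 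Using the three-parameter Möbius freedom of $\mathrm{Aut}(\mathbb{H})$ together with the hypothesis that $\mathrm{abs}$ has a single component, one normalizes so that $f(P)=0$ and $f(\mathrm{abs})=(-\infty,-1]\cup[1,\infty)\cup\{\infty\}$ (the complement of $[-1,1]$ in $\partial\mathbb{H}$ viewed as a circle). Then $f(\mathrm{tg})=[-a_\epsilon,b_\epsilon]$ with $a_\epsilon,b_\epsilon=|f'(P)|\epsilon+O(\epsilon^2)$, and $f(\mathrm{ref})=[-1,-a_\epsilon]\cup[b_\epsilon,1]$. By the standard SDE--PDE correspondence the hitting probability equals $u_\epsilon(f(z_0))$, where $u_\epsilon$ is bounded harmonic on $\mathbb{H}$ with Dirichlet datum $1$ on $f(\mathrm{tg})$, Dirichlet datum $0$ on $f(\mathrm{abs})$, and Neumann datum $0$ on $f(\mathrm{ref})$.

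The four marked points $-1,-a_\epsilon,b_\epsilon,1$ determine a Schwarz--Christoffel map $F_\epsilon:\mathbb{H}\to R_\epsilon$ onto a rectangle of height $H_\epsilon$, sending target to the bottom horizontal side, absorbing boundary to the top, and the two reflecting arcs to the vertical sides. Inside the rectangle, uniqueness forces $u_\epsilon\circ F_\epsilon^{-1}$ to be affine in the vertical coordinate, so
$$u_\epsilon(f(z_0))=\frac{H_\epsilon-\mathrm{Im}\,F_\epsilon(f(z_0))}{H_\epsilon}.$$
The height $H_\epsilon$ is a complete elliptic integral of the first kind whose modulus tends to $1$ as $\epsilon\downarrow 0$; the classical asymptotic $K(k)=\log(4/\sqrt{1-k^2})+o(1)$ yields $H_\epsilon=\log(1/\epsilon)+\kappa(\Omega,P)+o(1)$ for an explicit constant $\kappa$ absorbing the prefactor $|f'(P)|$ and the $O(\epsilon^2)$ endpoint perturbations. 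For $z_0\neq P$, splitting the integral defining $F_\epsilon$ at a disk of radius $O(\sqrt\epsilon)$ around $0$ and using dominated convergence on the complement shows $H_\epsilon-\mathrm{Im}\,F_\epsilon(f(z_0))\to L(z_0)$, a finite positive limit given by a convergent integral over a path from $f(z_0)$ to the ``top-side'' image of $\infty$ in $\mathbb{H}$. Combining,
$$\mathbb{P}(B^{z_0}_\tau\in\mathrm{tg})\cdot\log\epsilon=\frac{L(z_0)+o(1)}{H_\epsilon}\cdot\log\epsilon\;\longrightarrow\;-L(z_0)=:C(\Omega,z_0).$$

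The main obstacle is the simultaneous control in the previous display of two quantities built from the same SC integrand but degenerating at different rates: the logarithmic blow-up of $H_\epsilon$ and the finite limit of $H_\epsilon-\mathrm{Im}\,F_\epsilon(f(z_0))$. The key estimate splits the integration contour into a shrinking disk around the collapsing target (which yields the $\log(1/\epsilon)$ contribution, independent of $z_0$) and its complement (which is uniformly bounded and carries the $z_0$-dependence). A secondary technical point is that $f(\mathrm{tg})$ is only approximately the interval $[-|f'(P)|\epsilon,|f'(P)|\epsilon]$; however the $O(\epsilon^2)$ correction to the endpoints is absorbed into the bounded constant $\kappa(\Omega,P)$ and thus does not affect the leading $\log(1/\epsilon)$ asymptotic.
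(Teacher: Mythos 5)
Your proof is correct and follows essentially the same route as the paper: conformal transfer to $\mathbb{H}$, Schwarz--Christoffel onto a degenerating rectangle, explicit harmonic measure there, and elliptic-integral asymptotics giving $H_\epsilon\sim\log(1/\epsilon)$. The only cosmetic differences are that you map $\Omega\to\mathbb{H}$ directly rather than via $\mathbb{D}$, place the shrunken target near $0$ rather than at $[1,1+\epsilon]$ as in Theorem~\ref{theoreminH}, and invoke uniqueness of the affine harmonic function in the rectangle in place of the paper's reflection to an infinite strip.
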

\section{Standard Definitions and Standard Theorems}
\begin{theorem}
Let $f$ be a conformal bijection between $\mathbb{D}$ and $\Omega$ where $\Omega$ is an analytic simply connected domain. Then by lemma \ref{analyticextensionlemma} $f$ may be extended to a conformal bijection $g$. If, in addition, we fix two points on $\partial\Omega$ together with their derivatives, then such $f$ is unique. \cite{conformalmapsandgeo}
\end{theorem}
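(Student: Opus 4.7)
The plan is to reduce the uniqueness statement to a claim about automorphisms of the unit disk and then invoke the classification of M\"obius transformations. Suppose $f_1, f_2 : \mathbb{D} \to \Omega$ are two conformal bijections both satisfying the prescribed data at the fixed points $w_1, w_2 \in \partial \Omega$, with analytic extensions $g_1, g_2$ furnished by Lemma \ref{analyticextensionlemma}. Then $h := g_2^{-1} \circ g_1$ extends to a conformal bijection of a neighborhood of $\overline{\mathbb{D}}$ onto itself whose restriction to $\mathbb{D}$ lies in $\mathrm{Aut}(\mathbb{D})$, so by the standard description $h(z) = e^{i\theta}(z-a)/(1-\bar a z)$ with $a \in \mathbb{D}$, a three-real-parameter family. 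Writing $\zeta_i^{(j)} := g_j^{-1}(w_i) \in \partial \mathbb{D}$, the identity $g_2 \circ h = g_1$ and the chain rule give $h(\zeta_i^{(1)}) = \zeta_i^{(2)}$ and $h'(\zeta_i^{(1)}) = 1$ for $i=1,2$, so the task reduces to showing that the only M\"obius automorphism of $\mathbb{D}$ satisfying these four boundary conditions is the identity.

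I would execute this in two steps. First, I would show the preimages must agree, $\zeta_i^{(1)} = \zeta_i^{(2)}$, by using that the derivative of a M\"obius automorphism at a boundary point is a positive real quantity (the local hyperbolic scaling factor along $\partial \mathbb{D}$) and observing that requiring this factor to equal $1$ at two distinct boundary points whose images differ over-constrains the three-real-parameter group. Once $h$ is known to fix $\zeta_1^{(1)}$ and $\zeta_2^{(1)}$ on the boundary, $h$ is either the identity or a nontrivial hyperbolic element; in the latter case its boundary derivatives at the two fixed points are $\lambda$ and $\lambda^{-1}$ for some $\lambda \neq 1$, contradicting $h'(\zeta_i^{(1)}) = 1$. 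Hence $h = \mathrm{id}$ and $f_1 = f_2$.

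The principal obstacle is the preimage-matching step. Because the data is phrased intrinsically on $\partial \Omega$ rather than on $\partial \mathbb{D}$, the classical Riemann-mapping uniqueness, which naturally fixes points on the source disk together with an interior point or an argument of a derivative, does not apply directly; instead one must argue that four real conditions on a three-parameter group admit only the trivial solution. The calculation reduces to inspecting the boundary derivative of the explicit M\"obius form above, and the remainder of the argument then follows from the standard classification of $\mathrm{Aut}(\mathbb{D})$ into elliptic, parabolic, and hyperbolic elements.
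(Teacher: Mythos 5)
The overall reduction is sound: pass to $h := g_2^{-1}\circ g_1 \in \mathrm{Aut}(\mathbb{D})$, observe that $h$ fixes two boundary points with boundary derivative $1$, and invoke the elliptic/parabolic/hyperbolic classification. Your second step is exactly right. But the first step as written rests on a false claim and a non-rigorous dimension count. The assertion that ``the derivative of a M\"obius automorphism at a boundary point is a positive real quantity'' is only true at \emph{fixed} boundary points. For a general $z = e^{i\theta}\in\partial\mathbb{D}$, writing $h(e^{i\theta}) = e^{i\Theta(\theta)}$ and differentiating along the circle gives
\[
h'(z) = \frac{h(z)}{z}\,\Theta'(\theta), \qquad \Theta'(\theta)>0,
\]
so $h'(z)$ has argument $\arg h(z)-\arg z$ and is positive real precisely when $h(z)=z$. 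Moreover, observing that four real constraints are imposed on a three-real-parameter group only shows that the solution set is \emph{generically} zero-dimensional; it does not by itself rule out a stray nontrivial solution, so the over-constraint remark is not a proof.

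The good news is that the formula above both corrects the claim and renders the preimage-matching step immediate, with no counting required. From the chain rule applied to $g_2\circ h = g_1$ and the prescribed data $g_1'\bigl(\zeta_i^{(1)}\bigr) = g_2'\bigl(\zeta_i^{(2)}\bigr)$, you already have $h'\bigl(\zeta_i^{(1)}\bigr) = 1$ as a complex equality. Then
\[
1 = h'\bigl(\zeta_i^{(1)}\bigr) = \frac{\zeta_i^{(2)}}{\zeta_i^{(1)}}\,\Theta'(\theta_i),
\]
and since $\Theta'(\theta_i)>0$ and $\bigl|\zeta_i^{(2)}/\zeta_i^{(1)}\bigr| = 1$, this forces $\zeta_i^{(2)} = \zeta_i^{(1)}$ and $\Theta'(\theta_i)=1$. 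With that patch, your hyperbolic-multiplier argument applies verbatim and yields $h=\mathrm{id}$, hence $f_1=f_2$. (For reference: the paper records this theorem as a standard fact without supplying a proof, so there is no in-paper argument to compare against; your route, once repaired as above, is a perfectly reasonable one.)
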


\begin{theorem}
    (Christoffel-Schwarz formula) Assume $\displaystyle \sum^{n-1}_{k=1}\beta_k<2$. Let $\mathfrak{p}$ be a polygonal region in the complex plane with vertexs $a_1$, $a_2$, ..., $a_n$ where the angle at vertex $a_k$ for $k<n$ is $(1-\beta_k)\pi$ and the angle at vertex $a_n$ is $(1-\beta_n)\pi$ where $$\beta_{n}=2-\sum_{k=1}^{n-1} \beta_k.$$ Let $F$ be a conformal map from the upper half-plane to $\mathfrak{p}$ such that the points $A_1, \ldots, A_{n-1}, \infty=A_n$ on $\mathbb{R}$ are sent to the vertices of $\mathfrak{p}$ where $A_k$ is sent to $a_k$. Then there exist unique constants $C_1$ and $C_2$ such that
$$
F(z)=C_1 \int_0^z \frac{d \zeta}{\left(\zeta-A_1\right)^{\beta_1} \cdots\left(\zeta-A_{n-1}\right)^{\beta_{n-1}}}+C_2 .
$$
where $(\zeta-A_k)^{\beta_k}$ is defined by cutting along ray $\{A_k-ri:r\geq 0\}$. Furthermore, such $F$ is unique. \cite{stein2010complex} (Chapter 8 Theorem 4.7) 
\label{christoffel-Schwarz}
\end{theorem}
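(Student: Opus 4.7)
The plan is to analyze the logarithmic derivative $F''/F'$ globally via Schwarz reflection and Liouville's theorem, and then recover $F$ by two integrations. First I would extend $F$ analytically across the boundary. Since each edge of $\mathfrak{p}$ is a straight line segment, the Schwarz reflection principle applied across each open interval $(A_k,A_{k+1})$ of $\mathbb{R}$ extends $F$ to the lower half-plane via $F(\bar z)=\sigma_k(F(z))$, where $\sigma_k$ is the anti-holomorphic reflection through the line containing the $k$th edge. Iterating reflections across successive intervals defines $F$ as a multi-valued holomorphic function on $\mathbb{C}\setminus\{A_1,\ldots,A_{n-1}\}$. Crucially, any two branches differ by post-composition with a rigid motion $w\mapsto \alpha w+\beta$, so the Schwarzian-like quantity $F''/F'$, being invariant under affine post-composition, descends to a genuine single-valued meromorphic function on $\mathbb{C}\setminus\{A_1,\ldots,A_{n-1}\}$.

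Next I would pin down the local behavior at each singular point. At a finite $A_k$, the angle-$(1-\beta_k)\pi$ corner at $a_k$ is straightened by the inverse map $w\mapsto(w-a_k)^{1/(1-\beta_k)}$ with appropriate branch; composing with $F$ turns the corner into a half-disk, so the resulting map is holomorphic across $A_k$ by Schwarz reflection with nonzero derivative. This gives the local expansion $F(z)-a_k=(z-A_k)^{1-\beta_k}h_k(z)$ with $h_k$ holomorphic and nonvanishing at $A_k$, whence $F'(z)=(z-A_k)^{-\beta_k}\tilde h_k(z)$ and
$$\frac{F''(z)}{F'(z)}=-\frac{\beta_k}{z-A_k}+(\text{holomorphic at }A_k).$$
The same local analysis applied at the vertex $a_n$ in the coordinate $w=-1/z$ near $w=0$ yields $F(z)-a_n\sim c\,z^{-(1-\beta_n)}$ for large $z$, and consequently $F'(z)\sim c'\,z^{\beta_n-2}$, so $F''(z)/F'(z)=(\beta_n-2)/z+O(1/z^2)$ as $z\to\infty$.

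The Liouville step is then immediate. The meromorphic function
$$\Phi(z):=\frac{F''(z)}{F'(z)}+\sum_{k=1}^{n-1}\frac{\beta_k}{z-A_k}$$
is entire since the simple-pole principal parts cancel. Using the angle-sum identity $\sum_{k=1}^{n-1}\beta_k=2-\beta_n$, the sum of fractions equals $(2-\beta_n)/z+O(1/z^2)$ at infinity, which exactly cancels the $(\beta_n-2)/z$ term of $F''/F'$; hence $\Phi(z)=O(1/z^2)$ at infinity, forcing $\Phi\equiv 0$. Integrating once, $F'(z)=C_1\prod_{k=1}^{n-1}(z-A_k)^{-\beta_k}$ with the branch cuts $\{A_k-ri:r\geq 0\}$, and integrating again along any path in the cut plane from $0$ to $z$ gives the stated formula with $C_2=F(0)$. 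Uniqueness of $C_1,C_2$ given $F$ is clear from the two integrations; uniqueness of $F$ itself follows because fixing the correspondences $A_k\mapsto a_k$ removes the three real degrees of freedom of $\mathrm{Aut}(\mathbb{H})$, and Riemann's mapping theorem then supplies uniqueness.

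The main obstacle will be the asymptotic analysis at infinity: both justifying $F'(z)\sim c\,z^{\beta_n-2}$ rigorously via the coordinate $w=-1/z$, and checking that the hypothesis $\beta_n=2-\sum_{k<n}\beta_k$ is exactly what kills the $1/z$ term of $\Phi$, so that Liouville applies without leaving a free additive constant. A secondary technical point is justifying the Schwarz reflection in the first place, which requires $F$ to extend continuously to each open boundary interval $(A_k,A_{k+1})$; this follows from Carath\'eodory's boundary-extension theorem since $\mathfrak{p}$ is a Jordan domain.
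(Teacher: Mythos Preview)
The paper does not prove this theorem at all: it is quoted as a standard result with the citation to Stein--Shakarchi (Chapter~8, Theorem~4.7), and no argument is given in the paper itself. Your proposal is essentially the classical proof (as in Ahlfors or Stein--Shakarchi): extend $F$ by Schwarz reflection so that $F''/F'$ becomes single-valued meromorphic on the sphere, compute its principal parts at the $A_k$ from the local corner-straightening expansions, use the angle-sum relation to show the function vanishes at infinity, apply Liouville, and integrate twice. This is correct and matches the textbook argument the paper is citing, so there is nothing to compare against within the paper.
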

\begin{definition}
    Let $\Omega$ be an analytic simply connected domain, $z$ be a point inside $\Omega$ and $A$ be a Lebesgue measurable subset of $\partial\Omega$. Then, by the Riemann mapping theorem, there exists a unique conformal bijection maps $\Omega$ onto $\mathbb{D}$ such that $f(z)=0$ and $f^\prime(z)>0$. Then $A$ is mapped onto $f(A)$ on $\partial\mathbb{D}$ and we define $|f(A)|$ be the Lebesgue measure of $|f(A)|$ on measurable space $\partial\mathbb{D}$. The \emph{harmonic measure} of $A$ with respect to $z$, $\omega_\Omega(z,A)$, is then defined by $\omega_\Omega(z,A)=\displaystyle\frac{|f(A)|}{2\pi}$.
    \cite{conformalmapsandgeo}
\end{definition}
We note the following relation between Brownian motion and harmonic measure.
\begin{theorem}
    Let $(B_t:t\geq 0)$ be a complex Brownian motion in $\Omega$ and $A$ be a measurable set on $\partial\Omega$. Let $z$ be a point inside $\Omega$. Then we have $$\omega_\Omega(z,A)=\mathbb{P}\left(B_\tau^z\in A\right)$$ where $\tau=\inf\{t:B^z_t\in A\}$.
     \cite{Oksendal2003} pp.117
\end{theorem}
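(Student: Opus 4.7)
The plan is to invoke the conformal invariance of planar Brownian motion to transport the problem to the unit disk, and then use rotational symmetry to compute the exit distribution explicitly. First I would apply the Riemann mapping theorem together with Lemma \ref{analyticextensionlemma} to fix the unique conformal bijection $f:\Omega\to\mathbb{D}$ with $f(z)=0$ and $f'(z)>0$; because $\Omega$ is analytic, $f$ extends to a biholomorphism of open neighborhoods of $\overline{\Omega}$ and $\overline{\mathbb{D}}$. In particular $f|_{\partial\Omega}$ is a bi-Lipschitz homeomorphism onto $\partial\mathbb{D}$, so $f(A)\subset\partial\mathbb{D}$ is Lebesgue measurable whenever $A$ is.

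Next I would apply P.~L\'evy's theorem on the conformal invariance of planar Brownian motion: It\^o's formula applied to the holomorphic $f$ along the martingale $B^z$ shows that $f(B^z_t)$ is a continuous local martingale whose quadratic variation at time $t$ is $\sigma(t):=\int_0^t|f'(B^z_s)|^2\,ds$. By the Dambis--Dubins--Schwarz theorem, $f(B^z_t)=\tilde B_{\sigma(t)}$ for a complex Brownian motion $\tilde B$ started at $0$, i.e.\ $f(B^z)$ is a time-changed Brownian motion in $\mathbb{D}$. Because $f'$ is bounded and non-vanishing on a neighborhood of $\overline{\Omega}$, $\sigma$ is strictly increasing on $[0,\tau)$ and $\sigma(\tau-)$ equals the first exit time $\tilde\tau$ of $\tilde B$ from $\mathbb{D}$. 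A random time change does not affect the location at which a continuous path first leaves a set, so
$$
\mathbb{P}(B^z_\tau\in A)=\mathbb{P}\bigl(\tilde B^{0}_{\tilde\tau}\in f(A)\bigr).
$$

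Finally I would invoke rotational symmetry: for every rotation $R_\theta$ about $0$, the process $R_\theta\tilde B^0$ is again a complex Brownian motion started at $0$, and it exits $\mathbb{D}$ at $R_\theta\tilde B^0_{\tilde\tau}$, so the exit law is rotation-invariant on $\partial\mathbb{D}$, hence equals normalized Lebesgue measure. Therefore
$$
\mathbb{P}\bigl(\tilde B^{0}_{\tilde\tau}\in f(A)\bigr)=\frac{|f(A)|}{2\pi}=\omega_\Omega(z,A)
$$
by the definition of harmonic measure, and combining with the previous display proves the theorem.

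The main obstacle I anticipate is the bookkeeping of the time change at the boundary, specifically showing $\sigma(\tau-)=\tilde\tau$ almost surely so that the exit point of $\tilde B$ is literally $f$ applied to the exit point of $B^z$; this is what makes the analytic boundary hypothesis matter and is the place where one must argue carefully rather than merely cite L\'evy's theorem. Uniform positive lower and upper bounds on $|f'|$ up to $\partial\Omega$, which come from the analytic extension, reduce this to a standard continuity argument: $\sigma$ is bi-Lipschitz in $t$, so $\sigma^{-1}$ is continuous up to the exit time and transports the path's exit point from $\partial\Omega$ to $\partial\mathbb{D}$ exactly through $f$.
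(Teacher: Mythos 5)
The paper does not prove this theorem; it is cited verbatim as a standard result from Øksendal's book, so there is no internal proof to compare against. Your argument is nonetheless correct as a proof. The three pillars — Lévy's conformal invariance of planar Brownian motion via Itô plus Dambis--Dubins--Schwarz, the observation that a strictly increasing continuous time change preserves the location of first exit, and the rotation-invariance of the exit law from $\mathbb{D}$ started at $0$ — are exactly the ingredients needed, and they line up perfectly with the paper's definition of $\omega_\Omega(z,A)$, which is already phrased in terms of the normalizing conformal map $f$ with $f(z)=0$, $f'(z)>0$. Your worry about $\sigma(\tau-)=\tilde\tau$ is the right place to be careful; the uniform bounds on $|f'|$ coming from the analytic extension make this easy, though for the record the weaker Carathéodory continuous boundary extension (available for any Jordan domain) would already suffice, since all one needs is that $f(B^z_t)\to f(B^z_\tau)\in\partial\mathbb{D}$ as $t\uparrow\tau$ and that $\sigma$ is continuous and strictly increasing on $[0,\tau)$. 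For comparison, the cited source proves the corresponding statement (in any dimension, without conformal tools) by identifying both $\mathbb{P}(B^z_\tau\in A)$ and $\omega_\Omega(z,A)$ as the bounded harmonic function on $\Omega$ with boundary data $\mathbf{1}_A$ and invoking uniqueness for the Dirichlet problem; that route is more general, while your complex-analytic route is shorter here precisely because the paper has already built harmonic measure out of the Riemann map.
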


\section{Proof of Main Result in Simply Connected Domains in $\mathbb{C}$}
\subsection{Explicit Computation in $\mathbb{H}$}
In this subsection, we prove a result that is very similar and can be easily modified to our main result by explicit computation in the upper half plane $\mathbb{H}$. 
\begin{theorem}
Let $\mathbb{H}=\{z\in\mathbb{C}:\Im(z)>0\}$ be our domain and $(0,1)\cup(1+\epsilon,\infty)$ be our reflecting boundary, and $[-\infty,0]\cup[1,1+\epsilon]$ be our absorbing boundary while $[1,1+\epsilon]$ is our target. Let $(B_t,t\geq 0)$ be the RBM in $\mathbb{H}$. Then $$\mathbb{P}\left(B^{z_0}_\tau\in(1,1+\epsilon)\right)\sim \frac{C(z_0)}{\log\epsilon}$$ as $\epsilon\to0$, where $C(z_0)=\Re\left(\lim_{\epsilon\downarrow 0}f_\epsilon(z_0)\right)$. 
\label{theoreminH}
\end{theorem}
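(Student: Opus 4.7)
The plan is to reduce the mixed boundary-condition problem on $\mathbb{H}$ to one-dimensional Brownian motion on an interval by conformally mapping $\mathbb{H}$ onto a rectangle. Applying the Christoffel--Schwarz formula (Theorem \ref{christoffel-Schwarz}) with preimages $A_1=0$, $A_2=1$, $A_3=1+\epsilon$, $A_4=\infty$ and all four interior angles equal to $\pi/2$ (so each $\beta_k=\tfrac12$) produces
\[
f_\epsilon(z)\;=\;C_1\int_{0}^{z}\frac{d\zeta}{\sqrt{\zeta(\zeta-1)(\zeta-1-\epsilon)}}+C_2,
\]
with $C_1,C_2$ and branches of the square root chosen so that $f_\epsilon$ is a conformal bijection of $\mathbb{H}$ onto an axis-aligned rectangle $R_\epsilon$. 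By construction the two reflecting segments $(0,1)$ and $(1+\epsilon,\infty)$ are sent to the two horizontal sides of $R_\epsilon$, while the two absorbing segments $(-\infty,0]$ and the target $[1,1+\epsilon]$ are sent to the two vertical sides.

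Next I would transfer the mixed-BC problem to the rectangle by conformal invariance. The hitting probability $u(z):=\mathbb{P}(B^z_\tau\in[1,1+\epsilon])$ is the unique bounded harmonic function on $\mathbb{H}$ with Dirichlet data $1$ on the target, Dirichlet data $0$ on $(-\infty,0]$, and vanishing Neumann data on the reflecting arcs. Under $f_\epsilon$ this becomes the analogous problem on $R_\epsilon$, whose unique solution is the linear function $\tilde u(x+iy)=x/L_\epsilon$ where $L_\epsilon$ is the width of $R_\epsilon$; equivalently, the time-changed RBM supplied by Proposition \ref{timechangegeneraldomain} in $R_\epsilon$ decouples into a reflecting 1D Brownian motion on the vertical coordinate and an independent 1D Brownian motion on the horizontal coordinate absorbed at $0$ and $L_\epsilon$ (gambler's ruin), so that
\[
u(z_0)\;=\;\frac{\Re f_\epsilon(z_0)}{L_\epsilon}.
\]

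What remains is the asymptotics of the two factors. The width is
\[
L_\epsilon\;=\;|C_1|\int_0^1\frac{d\zeta}{\sqrt{\zeta(1-\zeta)(1+\epsilon-\zeta)}},
\]
which develops a non-integrable singularity at $\zeta=1$ as $\epsilon\downarrow 0$; the substitution $1-\zeta=\epsilon v$ localises the divergence and yields $L_\epsilon=-\log\epsilon+O(1)$ under the natural normalisation $|C_1|=1$. For the numerator, for any fixed $z_0\in\mathbb{H}$ the integrand defining $f_\epsilon(z_0)$ converges locally uniformly to $1/((\zeta-1)\sqrt{\zeta})$, and dominated convergence along a contour from $0$ to $z_0$ inside $\mathbb{H}$ gives $f_\epsilon(z_0)\to f_0(z_0)$ for an explicit limit. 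Combining these shows $\mathbb{P}(B^{z_0}_\tau\in(1,1+\epsilon))\sim C(z_0)/\log\epsilon$ with $C(z_0)=\Re f_0(z_0)$ up to the sign imposed by the chosen normalisation.

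The hard part is the careful bookkeeping of square-root branches along $\partial\mathbb{H}$ needed to verify that the image actually is a rectangle with the target mapping onto a single full vertical side (so that the 1D reduction is valid), together with pinning down the leading constant in $L_\epsilon\sim-\log\epsilon$ via the elliptic-integral asymptotics. A secondary technical issue is that $R_\epsilon$ is not analytic at its four corners, so Proposition \ref{timechangegeneraldomain} does not apply to it verbatim; one handles this by noting that points are polar for planar Brownian motion, so the four corner images are hit with probability zero and the interpretation of RBM on $R_\epsilon$ is unambiguous on the relevant event.
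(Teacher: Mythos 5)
Your proposal is correct and follows essentially the same route as the paper: Christoffel--Schwarz maps $\mathbb{H}$ onto a rectangle $R_\epsilon$ with the target sent to one short side, the hitting probability becomes the harmonic measure of that side with the mixed reflecting/absorbing conditions, and the answer $\Re f_\epsilon(z_0)/L_\epsilon$ with $L_\epsilon \sim -\log\epsilon$ gives the asymptotics. The only place you diverge is the passage from the rectangle to the explicit formula: the paper proves a reflection identity (Proposition~\ref{reflectionp}) and iterates it to unfold the rectangle into an infinite strip so that the pure harmonic measure of the strip applies, whereas you read off the linear solution of the mixed Dirichlet--Neumann problem directly (equivalently, decouple the rectangle RBM into an independent reflected vertical coordinate and a gambler's-ruin horizontal coordinate). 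Both are valid and quantitatively identical; your decoupling argument is slightly more elementary and skips the strip construction, while the paper's reflection-to-strip device sidesteps having to invoke the RBM--mixed-BVP correspondence by reducing to the ordinary harmonic measure. Your observation that the rectangle's corners break analyticity but are polar, so Proposition~\ref{timechangegeneraldomain} still applies off a null event, is exactly how the paper handles it.
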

\begin{proof}
By Christoffel-Schwarz mapping formula: $$f_\epsilon(z)=\int^z_0\frac{\mathrm{d}\xi}{\sqrt{\xi}\sqrt{\xi-1}\sqrt{\xi-(1+\epsilon)}}$$ where each $\sqrt{z}$ is obtained by cutting along the negative imaginary axis, we maps upper half plane to a rectangle $ABCD$ where $A=f_\epsilon(z_0):=0$, $B=f_\epsilon(1):=-L_\epsilon$, $D=f_\epsilon(+\infty):=-il_\epsilon$ and $C=f_\epsilon(1+\epsilon):=-L_\epsilon-il_\epsilon$, as shown in figure \ref{fig2}. We note that $$L_\epsilon:=\int_0^1\frac{\mathrm{d}\xi}{\sqrt{\xi}\sqrt{1-\xi}\sqrt{\left(1+\epsilon\right)-\xi}}=\frac{2K\left(\frac{1}{1+\epsilon}\right)}{\sqrt{1+\epsilon}}$$ and $$l_\epsilon:=\int_1^{1+\epsilon}\frac{\mathrm{d}\xi}{\sqrt{\xi}\sqrt{1-\xi}\sqrt{(1+\epsilon)-\xi}}$$
are convergent integrals, where $K$ is the complete elliptical integral of the first kind.
\begin{figure}[htbp]
    \centering
    \includegraphics[scale=0.4]{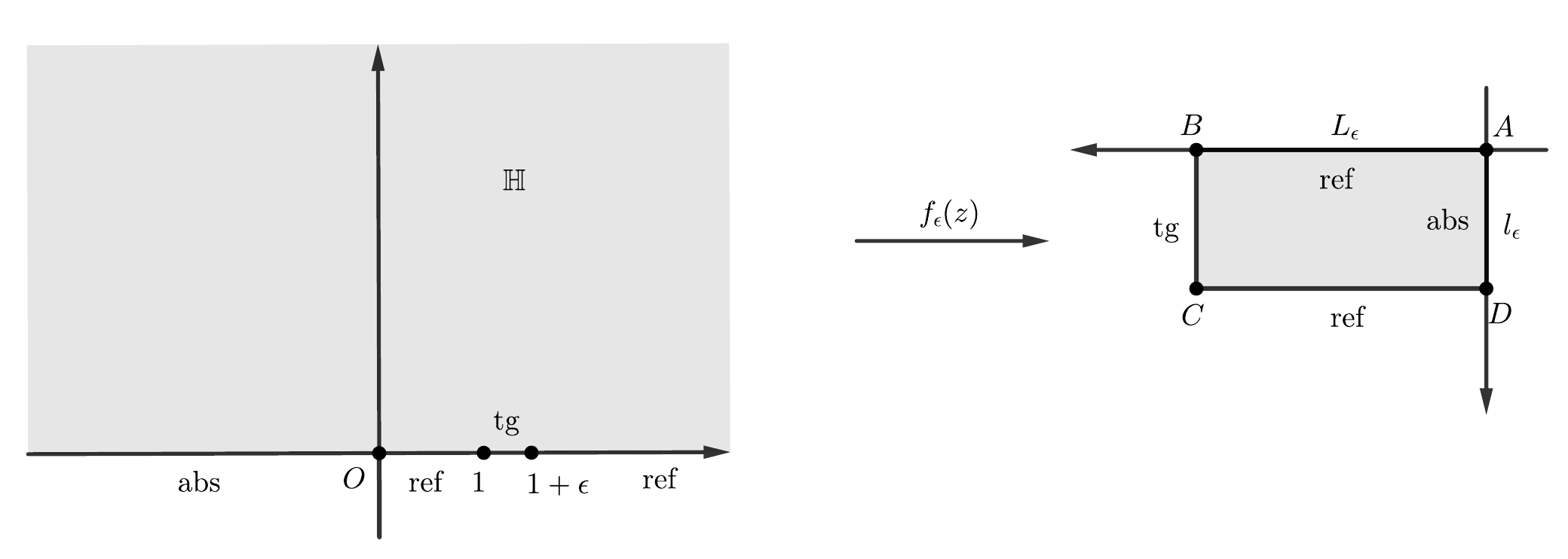}
    \caption{Conformal map}
    \label{fig2}
\end{figure}

\begin{proposition}
    RBM in $\mathbb{H}$ is mapped to the time-changed RBM in rectangle $ABCD$ by map $f_\epsilon$, with reflecting boundaries $AB\cup CD$, absorbing boundaries $AD\cup BC$ and target $BC$. 
\end{proposition}
\begin{proof}
    As four points $A, B, C, D$ are Lebesgue null sets, we may remove them from consideration, and rectangle $ABCD$ could be thought of as an analytic simply connected domain. The boundaries are mapped to corresponding boundaries of the rectangle as $f_{\epsilon}$ is injective. Then by proposition \ref{timechangeOmegatoH}, we may conclude.
\end{proof}
We observe the following reflection property regarding the RBM in the rectangle, which will be useful later.
\begin{proposition}
   \label{reflectionp} Let $z_0=x_0+iy_0$ be a point in the rectangle $ABCD$ such that there exists an open ball $B:=B_r(p)\subsetneq ABCD$ where $z_0\in B_r(p)$, $p\in AB$ and $$r<\min\{\mathrm{dist}(z_0,AD),\mathrm{dist}(z_0,BC)\}.$$ Let $R^{z_0}_t$ be the RBM in $ABCD$ starts at $z_0$ and $\tau_B=\inf\{t\geq 0:  R^{z_0}_t\in\partial{B}\cap ABCD\}$. Let $Z^{z_0}_t=X^{x_0}_t+iY^{y_0}_t$ be the standard Brownian motion in $\mathbb{C}$ starts at $z_0$ such that both $X^{x_0}_t$ and $Y^{y_0}_t$ are independent standard Brownian motions on $\mathbb{R}$ start at $x_0$, $y_0$, respectively. Let $\tau^\prime_B=\inf\{t\geq 0: Z^{z_0}_t\in\partial B\}$, then up to indistinguishability, $$R^{z_0}_{t\wedge{\tau_B}}=X^{x_0}_{t\wedge \tau^\prime_B}-i\left|Y^{y_0}_{t\wedge\tau^\prime_B}\right|.$$
    
\end{proposition}
\begin{proof}
   By taking the minus sign, which preserves RBM, the stopped RBM $-R^{z_0}_{t\wedge{\tau_B}}$ satisfies the stochastic differential equation in the form of \eqref{eq1} as it has a normal reflection on $AB$. Let $H^{-z_0}_t$ be the RBM in $\mathbb{H}$ starts at $-z_0$ and that $\tau=\inf\{t\geq 0:  H^{-z_0}_t\in\partial(-B)\cap \mathbb{H}\}$. Then stopped RBM $H^{-z_0}_{t\wedge \tau}$ still satisfies the stochastic differential equation in the form of \eqref{eq1}. Thus by uniqueness stated in Theorem \ref{Constructioninupperhs} we see that $-R^{z_0}_{t\wedge{\tau_B}}=H^{-z_0}_{t\wedge \tau}$ up to indistinguishability. By Definition \ref{Constructionupperhalfplane} we can write $H^{-z_0}_{t\wedge \tau}=M^{-x_0}_{t\wedge \tau^\prime_{-B}}+i\left|N^{-y_0}_{t\wedge\tau^\prime_{-B}}\right|$ where both $M^{-x_0}_{t\wedge \tau^\prime_{-B}}$ and $N^{-y_0}_{t\wedge\tau^\prime_{-B}}$ are stopped standard Brownian motion in $\mathbb{C}$. By taking minus sign back and notice that with probability 1, $-M^{-x_0}_{t\wedge \tau^\prime_{-B}}:=X^{x_0}_{t\wedge \tau^\prime_B}$ and $-N^{-y_0}_{t\wedge \tau^\prime_{-B}}:=Y^{y_0}_{t\wedge\tau^\prime_B}$ are two time-changed Brownian motions in lower half plane, we have that $$R^{z_0}_{t\wedge{\tau_B}}=X^{x_0}_{t\wedge \tau^\prime_B}-i\left|Y^{y_0}_{t\wedge\tau^\prime_B}\right|$$ up to indistinguishabililty.
\end{proof}

so that we may consider the RBM in rectangle as a Brownian motion in strip.
\begin{proposition}
    RBM in rectangle $ABCD$ is \emph{equivalent} to the standard Brownian motion in the strip $S_\epsilon:=\{z\in\mathbb{C}:-L_\epsilon<\Re(z)<0\}$ with target at $T_\epsilon:=\{z\in\mathbb{C}:\Re(z)=-L_\epsilon\}$ and absorbing boundary at Imaginary axis. By \emph{equivalent} we mean that bijective map $h$ between Brownian motion in the strip (starts inside rectangle $ABCD$) and RBMs in the rectangle, preserving the corresponding stopping condition.
\end{proposition}
\begin{proof}
   The intuition is shown in figure \ref{fig3}: we may reflect $ABCD$ over its sides with length $L_\epsilon$ over and over again to get the strip we need.
   \begin{figure}[htbp]
       \centering
       \includegraphics[scale=0.4]{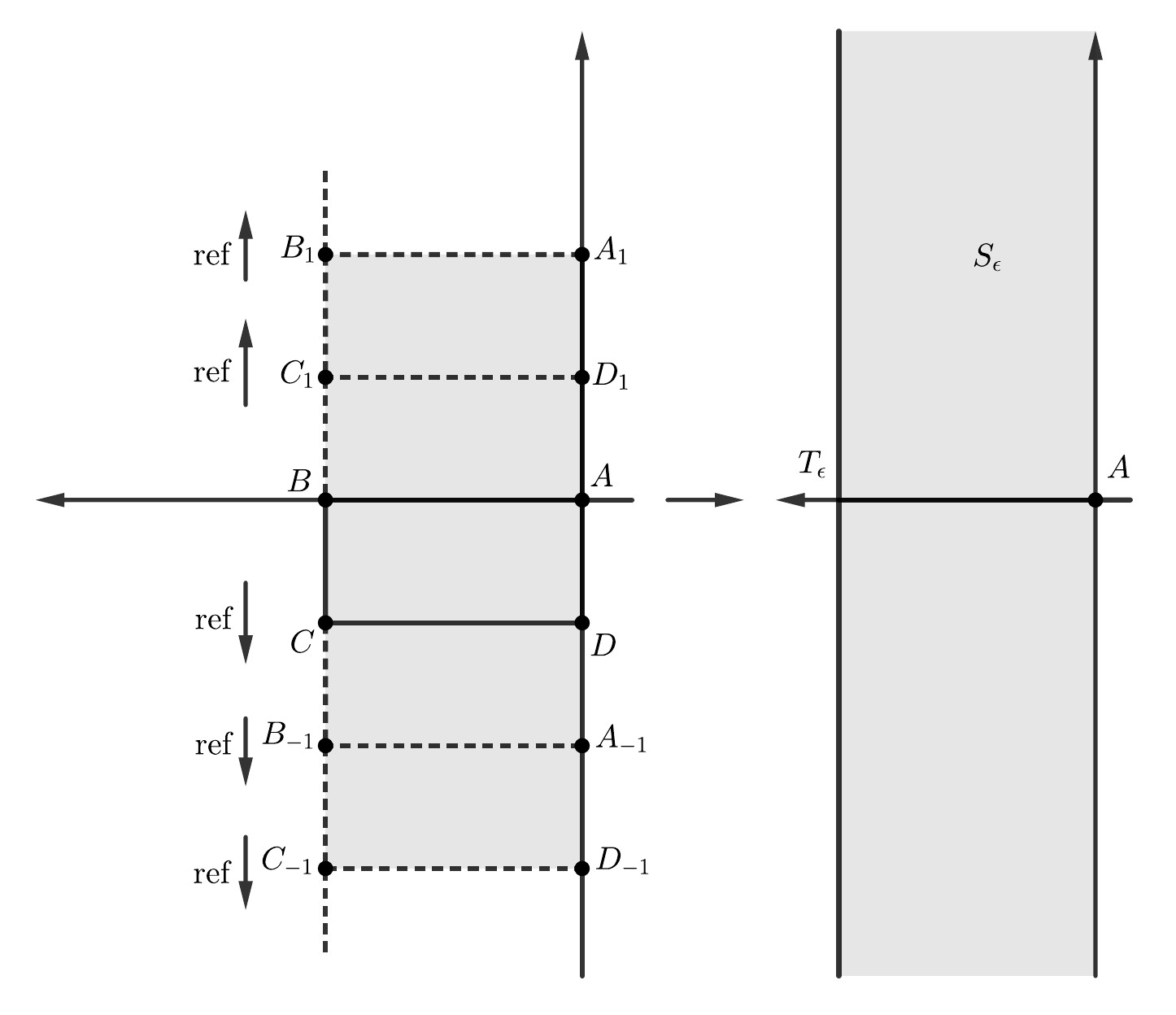}
       \caption{Reflecting rectangle $ABCD$}
       \label{fig3}
   \end{figure}
Define $h_0(z)=|\Im{z}|, h_n(z)=1-|1-|h_{n-1}(z)||$, and we notice that $h(z)=\lim_{n\to\infty} h_n(z)$ exists for all $z$. By proposition \ref{reflectionp}, $h$ works for the bijection.
\end{proof}
This implies that $$\mathbb{P}\left(B^{z_0}_\tau\in[1,1+\epsilon]\right)=\mathbb{P}\left(f_\epsilon\left(B^{z_0}\right)_\tau\in BC\right)=\mathbb{P}\left(\bar{B}^{f_\epsilon\left(z_0\right)}_{\tau}\in T_\epsilon\right)=\omega_S(f_\epsilon(z_0),T_\epsilon).$$ where $\bar{B}$ denotes the Brownian motion in $S_\epsilon$ and, abusing notations, $\tau$ denotes the target hitting time of each motion.
We can immediately write down the unique harmonic measure $\omega_S(f_\epsilon(z_0),T_\epsilon)$ in $S_\epsilon$: $$\omega_S(f_\epsilon(z_0),T_\epsilon)=-\frac{\Re(f_\epsilon(z_0))}{L_\epsilon}.$$
Now plug $z_0=Re^{i\theta}$ ($R>0, \theta\in(0,\pi)$) into Christoffel-Schwarz map, after some computation we reach $$
f_\epsilon(z_0)=\frac{e^{-\frac{i \theta}{2}}}{\sqrt{R}} \int_0^1 \frac{d t}{\sqrt{t} \sqrt{t-\frac{e^{-i \theta}}{R}}\sqrt{t-\frac{(1+\epsilon) e^{-i \theta}}{R}}}.$$ By simple Euclidean geometry,
$$\sqrt{\left|t-\frac{e^{-i \theta}}{R}\right|}\sqrt{\left|t-\frac{(1+\epsilon) e^{-i \theta}}{R}\right|}\geq \mathrm{dist}\left(\frac{e^{-i\theta}}{R},\mathbb{R}\right)=\frac{\sin\theta}{R}$$
for $t\in(0,1]$ and thus by Lebesgue Dominated Convergence Theorem we also have that $$\lim_{\epsilon\downarrow 0}f_\epsilon(z_0)=\int_0^1 \frac{\mathrm{d}t}{\sqrt{t}\left(t-\frac{e^{-i\theta}}{R}\right)}.$$
Now by standard result of $K(x)$ we conclude that $L_\epsilon\sim{-\log{\epsilon}}$ as $\epsilon\to 0$. Thus we conclude that $$\mathbb{P}\left(B^{z_0}_\tau\in(1,1+\epsilon)\right)\sim \frac{C(z_0)}{\log\epsilon}$$ as $\epsilon\to0$, where $C(z_0)=\Re\left(\lim_{\epsilon\downarrow 0}f_\epsilon(z_0)\right)$.
\end{proof}
\begin{corollary}
Let $\epsilon>0$ and $f_\epsilon$ be a conformal map mapping $\mathbb{H}$ into a rectangle $\mathcal{R}$ which can be extended that it maps $0,1,1+\epsilon$ and $\infty$ to corner of $\mathcal{R}$ (then $f_\epsilon$ is an uniquely represented Christoffel-Schwarz map). Then for any $z_0\in\mathbb{H}$ we have $$\lim_{\epsilon\to\infty}f_\epsilon(z_0)=C_1\int_0^1 \frac{\mathrm{d}t}{\sqrt{t}\left(t-\frac{e^{-i\theta}}{R}\right)}+C_2.$$ where $z_0=Re^{i\theta}$, $C_1$ and $C_2$ are the unique constants in Christoffel-Schwarz formula of $f_\epsilon$ as described in theorem \ref{christoffel-Schwarz}.
\end{corollary}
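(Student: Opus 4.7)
The statement reads ``$\lim_{\epsilon\to\infty}$'', but the right-hand side features the first-power factor $(t-e^{-i\theta}/R)$ which arises precisely when the two adjacent prevertices $1$ and $1+\epsilon$ merge, i.e.\ in the $\epsilon\downarrow 0$ regime treated in Theorem \ref{theoreminH}. In the $\epsilon\to\infty$ regime the prevertex $1+\epsilon$ would instead merge with the one at $\infty$, and (after absorbing a factor of $1/\sqrt\epsilon$ into $C_1$) the limiting integrand would be $1/(\sqrt t\sqrt{t-e^{-i\theta}/R})$, with a square root on the second factor rather than a first power; the asserted formula does not arise in that regime. I therefore take the ``$\infty$'' to be a typo for ``$0$'' and address the $\epsilon\downarrow 0$ version, flagging the discrepancy explicitly.

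Under this reading the plan is a short linearity argument on top of what is already done in the proof of Theorem \ref{theoreminH}. First, by the uniqueness clause of Theorem \ref{christoffel-Schwarz}, any conformal map $f_\epsilon:\mathbb{H}\to\mathcal{R}$ sending $0,1,1+\epsilon,\infty$ to the corners of $\mathcal{R}$ must take the form $f_\epsilon(z) = C_1 \tilde f_\epsilon(z) + C_2$, where
$$\tilde f_\epsilon(z) = \int_0^z \frac{d\xi}{\sqrt\xi\sqrt{\xi-1}\sqrt{\xi-(1+\epsilon)}}$$
is exactly the specific Christoffel-Schwarz integral used in the theorem (the case $C_1=1$, $C_2=0$), and $C_1, C_2$ are uniquely determined by $\mathcal{R}$. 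Second, substitution $\xi = z_0 t$ with $z_0 = Re^{i\theta}$ yields, as computed in the theorem proof,
$$\tilde f_\epsilon(z_0) = \frac{e^{-i\theta/2}}{\sqrt R}\int_0^1 \frac{dt}{\sqrt t\,\sqrt{t-e^{-i\theta}/R}\,\sqrt{t-(1+\epsilon)e^{-i\theta}/R}},$$
and the Euclidean-distance bound $\sqrt{|t-e^{-i\theta}/R|}\sqrt{|t-(1+\epsilon)e^{-i\theta}/R|}\geq \sin\theta/R$ already established there legitimises dominated convergence, so that the two square-root factors merge into the first-power factor $(t-e^{-i\theta}/R)$ in the limit. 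Passing $\epsilon\downarrow 0$ in the identity $f_\epsilon = C_1\tilde f_\epsilon + C_2$ then produces the claimed formula, with the harmless prefactor $e^{-i\theta/2}/\sqrt R$ absorbed into the multiplicative constant $C_1$.

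The main obstacle is conceptual rather than analytic: if the target rectangle $\mathcal{R}$ is fixed independently of $\epsilon$, then $C_1, C_2$ are in fact functions of $\epsilon$, and the symbols ``$C_1, C_2$'' appearing on the right-hand side of the statement must be understood as the limits $\lim_{\epsilon\downarrow 0}C_1(\epsilon)$ and $\lim_{\epsilon\downarrow 0}C_2(\epsilon)$; existence of these limits follows from the known asymptotics $L_\epsilon\sim -\log\epsilon$ and $l_\epsilon=O(1)$ of the rectangle sides recorded in the theorem's proof. A secondary technical point is the careful tracking of branch cuts so that $\sqrt{t-e^{-i\theta}/R}\,\sqrt{t-(1+\epsilon)e^{-i\theta}/R}\to (t-e^{-i\theta}/R)$ with the correct sign, which is a routine verification along the branch cuts specified in Theorem \ref{christoffel-Schwarz} and involves nothing beyond continuity of the principal branch off its cut.
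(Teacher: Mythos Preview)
Your proposal is correct and matches the paper's approach: the paper gives no separate proof for this corollary, treating it as an immediate consequence of the dominated-convergence computation already carried out in the proof of Theorem \ref{theoreminH}, and your linearity argument $f_\epsilon = C_1\tilde f_\epsilon + C_2$ on top of that computation is exactly the intended reading. Your identification of the ``$\epsilon\to\infty$'' typo (it should be $\epsilon\downarrow 0$) and your discussion of the $\epsilon$-dependence of $C_1, C_2$ when $\mathcal{R}$ is fixed are both valid observations that go beyond what the paper makes explicit.
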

\subsection{Proof of Main Results in $\mathbb{C}$}
Now, we are ready to prove the main results by explicit computation above. We start with stronger results.
\subsubsection{Proof of Theorem \ref{strongerresult}}
\begin{proof}
    We introduce some technical lemmas.

\begin{lemma}
    Let $\Omega$ be an analytic simply connected domain and $P$ be a point on $\partial\Omega$. Let $R>0$ be a fixed constant that $S_R(P)$ intersects $\partial\Omega$ at non-empty point set $\{A_t:t\in I\}$ for some index set $I$. Then $\displaystyle{\sup_{t_1,t_2\in I}\angle{A_{t_1}PA_{t_2}}}$ is attended by some unique $A_{t_1},A_{t_2}\in\partial{\Omega}\cap B_R(P)$.
\end{lemma}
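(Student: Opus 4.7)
The plan is to leverage analyticity of $\partial\Omega$ to reduce the index set $I$ to a finite one, after which the supremum automatically becomes a maximum. By Definition \ref{analyticdomainC}, $\partial\Omega = f(\partial\mathbb{D})$ for some univalent $f$ analytic in an open neighborhood of $\partial\mathbb{D}$. Composing with $\theta \mapsto e^{i\theta}$ yields a real-analytic parametrization $\gamma(\theta) = f(e^{i\theta})$ of $\partial\Omega$ on $\mathbb{R}/2\pi\mathbb{Z}$.

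First I would consider the real-analytic function $g(\theta) = |\gamma(\theta) - P|^2 - R^2$, whose zero set on the circle is in bijective correspondence with $\partial\Omega \cap S_R(P)$, i.e.\ with $\{A_t : t \in I\}$. If $g$ vanished identically then $\partial\Omega \subset S_R(P)$, but this is impossible since $P \in \partial\Omega$ and $P \notin S_R(P)$. Hence $g \not\equiv 0$, so by the identity theorem for real-analytic functions the zero set of $g$ is discrete; compactness of the circle then forces $|I| < \infty$.

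Once $I$ is finite, the supremum $\sup_{t_1,t_2 \in I} \angle A_{t_1} P A_{t_2}$ is a maximum over the finite set $I \times I$, hence attained by some pair $(A_{t_1}, A_{t_2}) \in S_R(P) \subset \overline{B_R(P)}$, which gives the existence part of the statement. This handles the routine bulk of the lemma.

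The main obstacle is the uniqueness of the attaining pair, which does not follow from finiteness alone: in principle two different pairs of intersection points could subtend the same maximal angle at $P$. I would address this by observing that the finitely many intersection points can be ordered cyclically along $\partial\Omega$ and that $\angle A_{t_1} P A_{t_2}$ depends real-analytically on the local parametrization; equality of two distinct maximal values would impose a non-generic algebraic constraint on the finite configuration $\{A_t\}_{t\in I} \subset S_R(P)$. If an unconditional uniqueness statement is really required, the cleanest fix is to perturb $R$ by an arbitrarily small amount to break any such coincidence; since all subsequent estimates involving this lemma are continuous in $R$, such a perturbation is harmless in the downstream proofs of Theorem \ref{result} and Theorem \ref{strongerresult}.
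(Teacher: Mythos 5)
Your existence argument is correct and takes a genuinely different route from the paper. The paper does not attempt to show $I$ is finite: it merely observes the angles are bounded, extracts a subsequence of pairs converging in the compact set $\partial\Omega\cap\overline{B_R(P)}$, and invokes continuity of the angle. Your identity-theorem argument (real-analytic $g(\theta)=|\gamma(\theta)-P|^2-R^2$, $g\not\equiv 0$ since $P\in\partial\Omega$ but $P\notin S_R(P)$, hence discrete and by compactness finite zero set) is cleaner and gives strictly more information, namely $|I|<\infty$; the paper's compactness argument is more general but does not yield finiteness. Either way, existence is fine.

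The genuine gap is the one you yourself flag: uniqueness. Your proposal does not prove it, and the perturbation-of-$R$ workaround does not prove \emph{this} lemma, only a modified statement; $R$ is fixed in advance as part of the definition of the absorbing boundary $\mathrm{abs}$, so you are not free to change it inside the proof of a lemma that is quantified over the given $R$. For comparison, the paper's proof of uniqueness is a one-line geometric claim: if two distinct pairs $(A_{t_1},A_{t_2})$ and $(A'_{t_1},A'_{t_2})$ both attain the supremum $s$, then some angle $\angle MPN$ among the four points is strictly larger than $s$, a contradiction. This is the step you should have engaged with. (It is worth noting the paper's claim is itself delicate: with the unoriented angle $\in[0,\pi]$, four antipodally placed points on $S_R(P)$ give two distinct pairs both subtending angle $\pi$ with no larger angle available, so the ``simple Euclidean geometry'' needs either a careful case analysis exploiting that the $A_t$ lie on a single analytic Jordan curve through $P$, or a convention on how the angle is measured.) But ``equality would be non-generic'' and ``perturb $R$'' are not substitutes for a proof; you need to either supply the Euclidean case analysis or show that the structure of $\partial\Omega$ (a single analytic arc through $P$) forces the maximizing pair to be the two ``outermost'' intersection points, which are determined uniquely by the ordering of $I$ along the curve.
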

\begin{proof}
    Firstly notice that $\displaystyle{\angle{A_{t_1}PA_{t_2}}\leq 2\pi}$ for all $t_1,t_2$ which means the supremum is well-defined and denote it by $s$. We may extract a subsequence $\angle{A^{(n)}_{t_1}PA^{(n)}_{t_2}}\to s$. As $\Omega$ is an analytic simply connected domain, observe that $\partial{\Omega}\cap B_R(P)$ is compact so by choosing a convergence subsequence $\left(A^{(n_k)}_{t_1}\right)_{k\geq 1}$ of $\left(A^{(n)}_{t_1}\right)_{n\geq 1}$ and another convergence subsequence $\left(A^{\left(n_{k_j}\right)}_{t_2}\right)_{j\geq 1}$ of $\left(A^{(n_k)}_{t_2}\right)_{k\geq 1}$, we find $\left(A^{\left(n_{k_j}\right)}_{t_1},A^{\left(n_{k_j}\right)}_{t_2}\right)_{j\geq 1}$ converge to some $A_{t_1}, A_{t_2}\in\partial{\Omega}\cap B_R(P)$ and $s=\angle{A_{t_1}PA_{t_2}}$. For uniqueness, we just notice that if there exist distinct pairs $(A_{t_1},A_{t_2})$ and $(A^\prime_{t_1},A^\prime_{t_2})$ such that $s=\angle{A_{t_1}PA_{t_2}}=\angle{A^\prime_{t_1}PA^\prime_{t_2}}$, then by simple Euclidean Geometry at least one of angle $\angle{MPN}$ where $M,N\in\{A_{t_1},A_{t_2},A^\prime_{t_1},A^\prime_{t_2}\}$ is greater than $s$. Contradiction.
\end{proof}
We call $A_{t_1}, A_{t_2}$ the \emph{endpoints} of $\partial{\Omega}\cap B_R(P)$. Now let $A,D$ be the endpoints of $\mathrm{tg}$ and $B,C$ be the endpoints of $\mathrm{ref}$. We also notice that when $\epsilon$ is very small $\mathrm{tg}$ has only one component. Rigorously speaking, we have the following technical lemma.
\begin{lemma}
   \label{lemmaonecomponent} Let $\Omega$ be an analytic simply connected domain, then for all $\epsilon$ small, $B_P(\epsilon)\cap\partial\Omega$ has only one connected component.
\end{lemma}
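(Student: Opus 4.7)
The plan is to pull the problem back from $\partial\Omega$ to the unit circle via the analytic parametrization given by Definition \ref{analyticdomainC}, where the question reduces to a one-dimensional monotonicity statement for a smooth function of the angle. Fix the univalent function $f$, analytic on some open neighborhood $U$ of $\partial\mathbb{D}$, with $\partial\Omega = f(\partial\mathbb{D})$, and let $\theta_0 \in [0, 2\pi)$ be the unique angle with $f(e^{i\theta_0}) = P$ (unique because $f|_{\partial\mathbb{D}}$ is injective).

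Two standard consequences of univalence will do most of the work. First, since $f$ is univalent on $U$, its derivative $f'$ is nowhere zero on $U$, so in particular $f'(e^{i\theta_0}) \neq 0$. Second, $f$ restricted to the compact set $\partial\mathbb{D}$ is a continuous injection, hence a homeomorphism onto $\partial\Omega$. I will then study the smooth real-valued function
\[
g(\theta) := |f(e^{i\theta}) - P|^2,
\]
for which a direct Taylor expansion at $\theta_0$ gives $g(\theta_0) = 0$, $g'(\theta_0) = 0$, and $g''(\theta_0) = 2|f'(e^{i\theta_0})|^2 > 0$. Continuity of $g''$ then furnishes a $\delta > 0$ on which $g''$ stays strictly positive; hence $g'$ is strictly increasing on $(\theta_0 - \delta, \theta_0 + \delta)$ and vanishes only at $\theta_0$, which yields strict monotonicity of $g$ on each side of $\theta_0$ inside that window.

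The final step is a compactness argument. Because $f|_{\partial\mathbb{D}}$ is injective, $g$ is strictly positive on the compact set $K_\delta := \{\theta \in [\theta_0, \theta_0 + 2\pi] : |\theta - \theta_0| \geq \delta\}$ and therefore attains a positive minimum $m_\delta > 0$ there. For every $\epsilon < \sqrt{m_\delta}$, the sublevel set $\{\theta : g(\theta) < \epsilon^2\}$ on the circle is forced to lie inside $(\theta_0 - \delta, \theta_0 + \delta)$, and by the one-sided monotonicity established in the previous paragraph it is then an open interval around $\theta_0$, hence connected. Pushing this interval forward through the homeomorphism $f|_{\partial\mathbb{D}}$ yields that $B_P(\epsilon) \cap \partial\Omega$ is connected, as claimed. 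The only mildly technical point is the passage from the second-order Taylor data at $\theta_0$ to genuine one-sided monotonicity, but this is a routine consequence of the continuity and positivity of $g''$ near $\theta_0$; no deeper complex-analytic input beyond non-vanishing of $f'$ is needed.
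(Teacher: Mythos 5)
Your proof is correct and takes a genuinely different route from the paper. The paper's argument is extrinsic: it enumerates the closed components of $B_P(\epsilon)\cap\partial\Omega$ not containing $P$, uses compactness plus a total-length bound on $\partial\Omega$ to show their distance to $P$ is bounded below by some $\xi>0$, and then shrinks $\epsilon$ below $\xi$. You instead pull the problem back to $\partial\mathbb{D}$ via the analytic parametrization $f$ and reduce everything to the single real function $g(\theta)=|f(e^{i\theta})-P|^2$, showing it has a non-degenerate minimum at the unique preimage $\theta_0$ of $P$ (the univalence input $f'(e^{i\theta_0})\neq 0$ is precisely what gives $g''(\theta_0)>0$); strict one-sided monotonicity of $g$ near $\theta_0$ together with a positive lower bound for $g$ on the complementary compact arc then force the sublevel set to be a single interval, and the homeomorphism $f|_{\partial\mathbb{D}}$ transports connectedness. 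Each approach has its merits: the paper's is softer in flavor and would adapt to merely rectifiable boundaries, while yours is more quantitative and exploits analyticity directly. In fact your parametrized argument makes explicit a point the paper's proof leaves implicit, namely that after fixing a reference radius $\epsilon_0$ and shrinking $\epsilon$, one must also rule out the arc of $\partial\Omega\cap B_P(\epsilon_0)$ through $P$ itself returning into $B_P(\epsilon)$ and producing extra components; that is exactly the local monotonicity near $\theta_0$ your second-order Taylor data supplies. One cosmetic note: $K_\delta$ should be understood as a subset of the circle, i.e.\ angles taken modulo $2\pi$ at distance at least $\delta$ from $\theta_0$, but this does not affect the argument.
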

\begin{proof}
    Let $\{C_t:t\in I\}$ be the collection of closure of components of $B_P(\epsilon)\cap\partial\Omega$ that doesn't contain $P$. Then as the connected component is closed, it is compact. So $\mathrm{dist}(P,C_t)>0$ for all $t\in I$. Now if $\inf\{\mathrm{dist}(P,C_t):t\in I\}=0$ then $I$ is infinite and we may take a subsequence $t_j$ such that $\mathrm{dist}(P,C_{t_j})\to0$. Now by compactness $\mathrm{dist}(P,C_t)$ is attended by some point $M_t\in C_t$, so for $j$ large we have all $M_{t_j}$ contained in a small disk $B_P(\delta)$ ($\delta<\epsilon$) and thus, as $C_{t_j}$ intersect $\partial\Omega$ at two points then their length is greater than $\epsilon-\delta$ for all $j$ large enough. But we have infinitely many such $j$ and thus $\ell(\partial\Omega)\geq\sum_t \ell(C_t)=\infty$, contradiction. So $\inf\{\mathrm{dist}(P,C_t):t\in I\}=\xi>0$. Now take $\epsilon<\xi$, and we are done.
\end{proof}
So it is enough to consider one component in $\mathrm{tg}$ and $\epsilon$ very small. Now, we can turn to prove of the main result. We first uniformize our domain. By Riemann Mapping Theorem, there is a unique conformal bijection $h$ (extended analytically to the boundary) that maps $\Omega$ onto unit disk $\mathbb{D}$ with boundary points $P$ mapped into $X$, $B$ mapped into $Y$ with non-vanishing derivative at $P$ and $B$. Let $f(C)=Z$. This is true because we assume $\Omega$ has an analytic boundary. This process is shown in figure \ref{fig4}. 
\begin{figure}[htbp]
    \centering
    \includegraphics[scale=0.4]{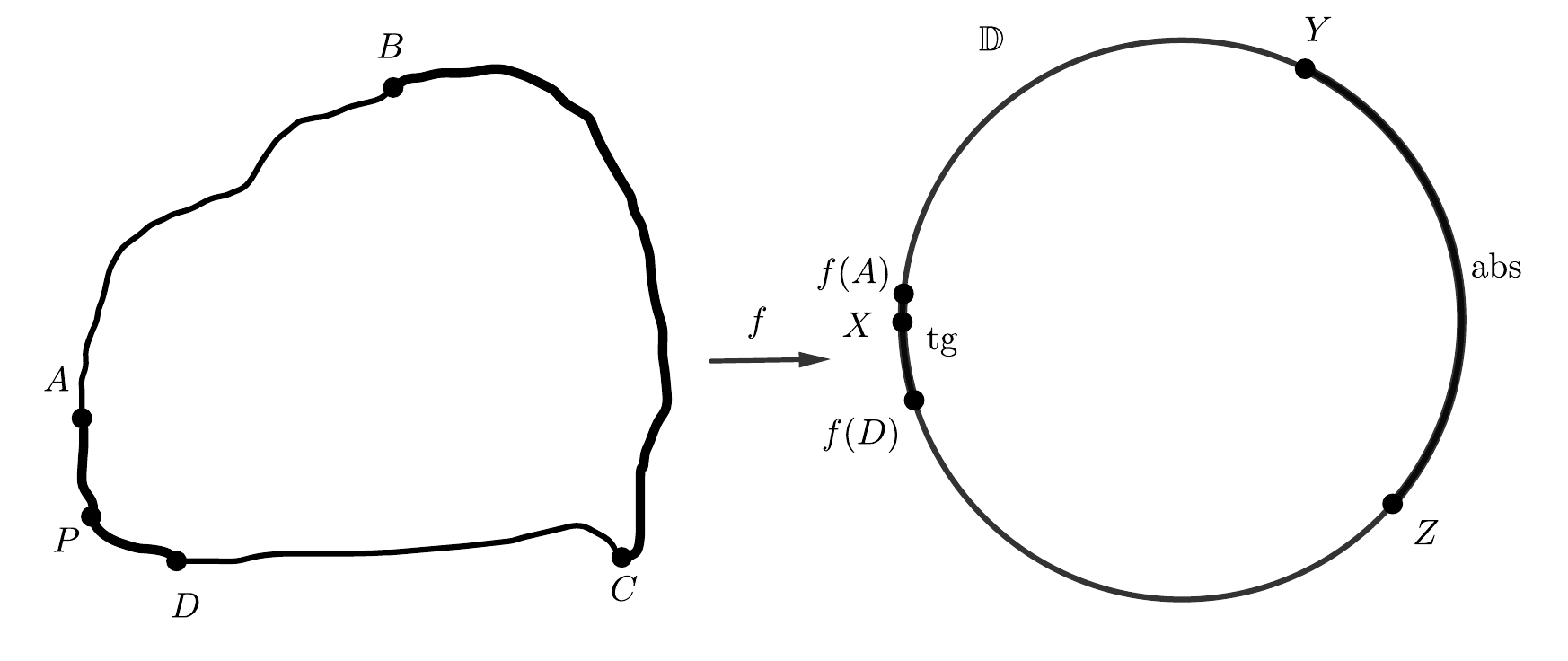}
    \caption{$\Omega$ being mapped into $\mathbb{D}$}
    \label{fig4}
\end{figure}

Now we can prove the stronger result. Let $f_1(z)$ be the extension of $f^{-1}$ beyond $\mathbb{D}$ as $\Omega$ is a analytic simply connected domain (see lemma \ref{analyticextensionlemma}). By open mapping theorem image of a neighborhood of $\partial\mathbb{D}$ via $f_1$ is an open neighborhood $\Delta$ of $\partial\Omega$. Let fixed $\delta$ so small that $B_P(\delta)$ is also contained in $\Delta$. Take $\epsilon<\delta$, then $f_1$ is invertible on $B_P(\epsilon)$ with holomprohic inverse $g_1$ that has derivatives bounded away from $0$. We can get an estimation for the lower bound using the univalent map $f_1$, which has a non-vanishing derivative on $\partial\mathbb{D}$. Also, for any $\eta>0$ we may choose $\eta_1$ such that for any $\epsilon<\eta_1$, $|g_1^\prime(z)-g_1^\prime(P)|\leq \eta$ for all $z\in B_P(\epsilon)$. Consider when $\eta<|g^\prime_1(P)|$ and $\epsilon$ is sufficiently small, then by boundary correspondence, for all $z\in \partial g_1(B_P(\epsilon))$, $f_1(z)\in\partial(B_P(\epsilon))$. Consider the line segment joining $z$ and $g_1(P)$, let it be $\gamma(\xi)$. Then,
$$\left|z-g_1(P)\right|= \int_\gamma|(g_1\circ f_1\circ\gamma)^\prime(\xi)|\mathrm{d} \xi\geq(|g_1^\prime(P)|-\eta)\ell(\gamma)\geq (|g_1^\prime(P)|-\eta)\epsilon.$$
and we also simultaneously have
$$\left|z-g_1(P)\right|\leq \int_P^{f_1(z)}|g_1^\prime(\xi)|\mathrm{d} \xi\leq(|g_1^\prime(P)|+\eta)\mathrm{dist}(f_1(z),P)\leq (|g_1^\prime(P)|+\eta)\epsilon.$$
Now let $M$ be the (unique) mobius transform sending unit disk $\mathbb{D}$ to $\mathbb{H}$ with $g_1(P)$ being sent to $1$, $g_1(B)$ being sent to $0$ and $g_1(D)$ being sent to $\infty$. We do assume that $g_1(A)$ being sent to complex number $1-k_1(\epsilon)>0$ and $g_1(B)$ to $g(D)=1+k_2(\epsilon)$ when $\epsilon$ small . Choose $\eta^\prime$ small, and by almost the same argument above we could also bound $$(|M^\prime(g_1(P))|-\eta^\prime)(|g_1^\prime(P)|-\eta)\epsilon\leq|k_2(\epsilon)-k_1(\epsilon)|\leq (|M^\prime(g_1(P))|+\eta^\prime)(|g_1^\prime(P)|+\eta)\epsilon.$$
for $\epsilon$ small enough. Just as above, choose $T_\epsilon(z)=\displaystyle\frac{z}{1-k_1(\epsilon)}$ and $T_\epsilon\circ M\circ g_1$ sends $\Omega$ to $\mathbb{H}$ again but with $A$ to 1 instead, so $B$ is sent to $\displaystyle\frac{1+k_2(\epsilon)}{1-k_1(\epsilon)}$. Hence $$\mathbb{P}\left(B^{z_0}_\tau\in\mathrm{tg}\right)=\mathbb{P}\left(\left(T_\epsilon\circ M\circ g_1\right)(B^{z_0})_\tau\in \left(1,\displaystyle\frac{1+h_2(\epsilon)}{1-h_1(\epsilon)}\right)\right)$$
Remark that $T_\epsilon\circ M\circ g_1(z_0)\to M\circ g_1(z_0)$ as $\epsilon\to 0$. As $\eta$, $\eta^\prime$ is arbitrary, hence by Theorem \ref{theoreminH}, $$\mathbb{P}\left(B^{z_0}_\tau\in\mathrm{tg}\right)\sim-\frac{\Re(M\circ g_1(z_0))}{\log(|M^\prime(g_1(P))||g_1^\prime(P)|\epsilon)}$$ as $\epsilon\downarrow 0$. Hence $$\mathbb{P}\left(B^{z_0}_\tau\in\mathrm{tg}\right)\log\epsilon\to-\Re(M\circ g_1(z_0))$$ as $\epsilon\downarrow 0$.
\end{proof}
\subsubsection{Proof of Theorem \ref{result} in $\mathbb{C}$}
\begin{proof}
Our result follows from the following lemma and proof of Theorem \ref{strongerresult}.
\begin{lemma}
    There is an arc of length independent of $\epsilon$ that is contained in $f(\mathrm{abs})$.
\end{lemma}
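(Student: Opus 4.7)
The plan is to exploit the fact that $\mathrm{abs}=\partial\Omega\setminus\overline{B_R(P)}$ is manifestly independent of $\epsilon$ and to locate a single connected component of $\mathrm{abs}$ whose image under $f$ has arc length bounded below as $\epsilon\downarrow 0$.

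First, I would pick any connected component $\alpha$ of the open set $\mathrm{abs}$; such a component exists by the standing hypothesis that there is a path in $\Omega$ joining $\mathrm{tg}$ to $\mathrm{abs}$, so $\mathrm{abs}\neq\emptyset$. Being a component of $\partial\Omega\setminus\overline{B_R(P)}$, $\alpha$ is an analytic arc of fixed positive arc length $\ell_0$ whose endpoints lie on $\partial B_R(P)\cap\partial\Omega$; in particular $\alpha$ sits at Euclidean distance $\geq R-\epsilon\geq R/2$ from $\overline{\mathrm{tg}}\subset\overline{B_\epsilon(P)}$ whenever $\epsilon<R/2$, and both $\alpha$ and its separation from the target are determined entirely by $\Omega,P,R$.

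Second, I would trace $\alpha$ through the factorization $f=T_\epsilon\circ M\circ g_1$ of the conformal map $\Omega\to\mathbb{H}$ that appeared in the proof of Theorem \ref{strongerresult}. The Riemann map $g_1:\Omega\to\mathbb{D}$ extends conformally across $\partial\Omega$ by lemma \ref{analyticextensionlemma} and does not depend on $\epsilon$, so $g_1(\alpha)$ is a fixed sub-arc of $\partial\mathbb{D}$ of positive length sitting at fixed positive distance from $g_1(P)$. Normalizing the Mobius $M:\mathbb{D}\to\mathbb{H}$ using three $\epsilon$-independent anchor points (for instance the images of $B,C,P$, each of which is an $\epsilon$-independent boundary point) makes $M$ itself $\epsilon$-independent, so $M(g_1(\alpha))\subset\partial\mathbb{H}$ is a fixed arc of positive length. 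Finally $T_\epsilon(w)=w/(1-k_1(\epsilon))$ is a linear scaling whose factor tends to $1$ as $\epsilon\downarrow 0$, so it preserves arc lengths up to a multiplicative factor $1+o(1)$. Combining the three estimates gives $\ell(f(\alpha))\to\ell(M(g_1(\alpha)))>0$, hence $f(\alpha)\subset f(\mathrm{abs})$ is an arc of length bounded below by a positive constant for all $\epsilon$ small enough.

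The main subtlety is that the normalization of $M$ actually adopted in the proof of Theorem \ref{strongerresult} sends the $\epsilon$-dependent point $g_1(D)$ to $\infty$, and since $D\to P$ as $\epsilon\downarrow 0$ this particular Mobius degenerates, so $|M'|$ fails to admit a uniform positive lower bound along $g_1(\alpha)$. The remedy is to adopt the $\epsilon$-independent normalization above: any two conformal maps $\Omega\to\mathbb{H}$ differ by an automorphism of $\mathbb{H}$, and the lemma is a purely geometric statement about the existence of a lower-bounded arc in $f(\mathrm{abs})$ that may be established under whichever normalization makes the bookkeeping cleanest.
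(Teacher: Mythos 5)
Your core observation is correct, but you are applying it to the wrong map. The $f$ of this lemma is the Riemann map $\Omega\to\mathbb{D}$ (what the preceding proof called $h$, or equivalently $g_1$ restricted to $\Omega$), not the full composition $T_\epsilon\circ M\circ g_1:\Omega\to\mathbb{H}$. This is visible from the sentence immediately after the lemma: the comparison arc $c_2$ is taken to be ``the arc with endpoints $Y$, $Z$'' on $\partial\mathbb{D}$, and Theorem \ref{strongerresult} is then applied to the disk $\mathbb{D}$ with absorbing boundary $c_1$ resp.\ $c_2$ and target $f(\mathrm{tg})$. Once you read $f=g_1$, the portion of your second paragraph ending with ``$g_1(\alpha)$ is a fixed sub-arc of $\partial\mathbb{D}$ of positive length sitting at fixed positive distance from $g_1(P)$'' already constitutes the entire proof: $\mathrm{abs}=(\mathcal{S}\setminus\overline{B_R(P)})\cap\partial\Omega$ is $\epsilon$-independent, $g_1$ is fixed by the $\epsilon$-independent normalization $P\mapsto X$, $B\mapsto Y$, hence $g_1(\mathrm{abs})$ is a fixed open subset of $\partial\mathbb{D}$ and any of its components supplies the required arc.

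Everything after that point in your proposal --- re-anchoring $M$ by $\epsilon$-independent boundary points, observing that the paper's normalization $g_1(D)\mapsto\infty$ degenerates as $D\to P$, and tracking $T_\epsilon$ --- is analysis of a map the lemma is not about. Your degeneracy observation about $M$ is accurate as a piece of geometry and does point at a genuine loose end in the presentation of Theorem \ref{strongerresult}, but for this lemma it is extraneous work that also changes the object $f(\mathrm{abs})$ you are estimating. The paper's one-line proof (``a modification of the length estimations in the proof of Theorem \ref{strongerresult}'') amounts to your $g_1$-step: the Lipschitz bounds on $g_1$ hold uniformly on a compact set bounded away from $P$, and since neither $g_1$ nor $\mathrm{abs}$ varies with $\epsilon$ the resulting lower bound on arc length is $\epsilon$-independent. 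So you do have a correct argument inside your proposal, just wrapped in a reduction that targets the wrong conformal map and is therefore not needed.
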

\begin{proof}
    Just a modification of length estimations in proof of Lemma \ref{lemmaonecomponent}.
\end{proof}
Observe that if we let $c_1$ be an arc contained in $\mathrm{abs}$ and $c_2$ be another arc containing $f(\mathrm{abs})$ (for example, such $c_2$ can be the arc with endpoints $Y$, $Z$, which does not intersect $f(\mathrm{tg})$). Let us consider the target hitting probability for the absorbing boundary to be $c_1$, $f(\mathrm{abs})$ and $c_2$, which are $\mathbb{P}_1$, $\mathbb{P}_2$, $\mathbb{P}_3$, respectively, then $\mathbb{P}_3\leq\mathbb{P}_2\leq\mathbb{P}_1$. As by Theorem \ref{strongerresult} $\mathbb{P}_3$ and $\mathbb{P}_1$ are comparable to $\displaystyle\frac{1}{\log\epsilon}$ as $\epsilon\downarrow 0$, $\mathbb{P}_2$ is as well.
\end{proof}
\section{Proof of Main Result in General SULD Domains}
As we remarked, we consider the case when $\epsilon$ is very small. In this case, we can localize our domain
\subsection{Localization of Domain}
Observe that when the RBM particle is near the target, the hitting probability is asymptotically irrelevant to the shape of the domain.

Without loss of generality let $\mathrm{tg}\in U^1$ and let $\mathscr{C}_1$, $\mathscr{C}_2$ be two fixed $n-1$ dimensional subsets in $U^1$ such that $\Phi_1(\mathscr C_1)=\partial B_{P}(r_1)\cap \Phi_1(U_1)$ and $\Phi_1(\mathscr C_2)=\partial B_{P}(r_2)\cap \Phi_1(U_1)$ for some $r_1>r_2>0$ and that both curves do not intersects $\mathrm{abs}$. As both $\mathscr{C}_1$ and $\mathscr{C}_2$ are disjoint compact sets their distance is separated, and we may assume that any curve $\gamma:[0,1]\to\Omega$ connecting some points on $\mathrm{abs}$ and $P=\gamma(1)$ that only intersects $\mathscr{C}_1$ and $\mathscr{C}_2$ once for each, intersects $\mathscr{C}_1$ at $t_1$ and $\mathscr{C}_2$ at $t_2$ where $t_1<t_2$ ($\mathscr{C}_2$ lies ``closer" to $P$ than $\mathscr{C}_1$). We let $\tau_1=\inf\{t\geq 0: B_t^{x_0}\in \mathscr{C}_2\}$. Let $\mathbb{P}_1:=\mathbb{P}(B^{x_0}_\tau\in\mathrm{tg}, B^{x_0}_{[\tau_1,\tau]}\cap \mathscr{C}_1=\emptyset)$. We claim the following observation:
\begin{proposition}
   \label{estimationPi} For $x_0$ lies in $B_P(r_1)\cap \Omega\setminus B_P(r_2)$, we have $$c_1\mathbb{P}_1\leq \mathbb{P}\left(B^{x_0}_\tau\in\mathrm{tg}\right) \leq c_2\mathbb{P}_1$$
\end{proposition}
\begin{proof}
    From definition of $\mathbb{P}_1$ we immediately have $\mathbb{P}_1\leq \mathbb{P}\left(B^{x_0}_\tau\in\mathrm{tg}\right)$, and notice that $$\mathbb{P}\left(B^{x_0}_\tau\in\mathrm{tg}\right)=\mathbb{P}_1+\mathbb{P}(B^{x_0}_\tau\in\mathrm{tg}, B^{x_0}_{[\tau_1,\tau]}\cap \mathscr{C}_1\neq\emptyset).$$ Since our RBM particle roams outside $\Phi_1^{-1}(\overline{B_{P}(r_1)}\cap \Phi_1(U_1))$ we have a smaller chance to hit $\mathrm{tg}$ so $\mathbb{P}_1\geq\mathbb{P}(B^{x_0}_\tau\in\mathrm{tg}, B^{x_0}_{[\tau_1,\tau]}\cap \mathscr{C}_1\neq\emptyset)$. Thus $\mathbb{P}\left(B^{x_0}_\tau\in\mathrm{tg}\right)\leq 2\mathbb{P}_1$. We may take $c_1=1$ and $c_2=2$.
\end{proof}
By strong Markov property,
\begin{equation}\label{eq2}\mathbb{P}\left(B^{x_0}_\tau\in\mathrm{tg}\right)=\mathbb{P}(\tau_1<\infty)\times\mathbb{P}\left(B_\tau^{B^{x_0}_{\tau_1}}\in\mathrm{tg}\right)
\end{equation}
as any RBM hits target will hits $\mathscr{C}_1$ by geometrical observation. Now from above proposition for asymptotic behavior we may consider $\mathbb{P}\left(B_\tau^{B^{x_0}_{\tau_1}}\in\mathrm{tg}, B^{x_0}_{[\tau_1,\tau]}\cap \mathscr{C}_1=\emptyset\right)$. This probability can be interpreted as the probability that the RBM starts at a random point in $\mathscr C_2$, roaming in $\Phi_1^{-1}(\overline{B_{P}(r_1)}\cap \Phi_1(U_1))$ and hits target, which is same as the target hitting probability that our domain is instead the interior of $\Phi_1^{-1}(\overline{B_{P}(r_1)}\cap \Phi_1(U_1))$ with absorbing boundary $\mathscr{C}_1$ and the same target. We can thus localize our domain into interior of $\Phi_1^{-1}(\overline{B_{P}(r_1)}\cap \Phi_1(U_1))$, which will be denoted by $U^\prime$, as shown in figure \ref{fig5}.
\begin{figure}[htbp]
    \centering
    \includegraphics[scale=0.4]{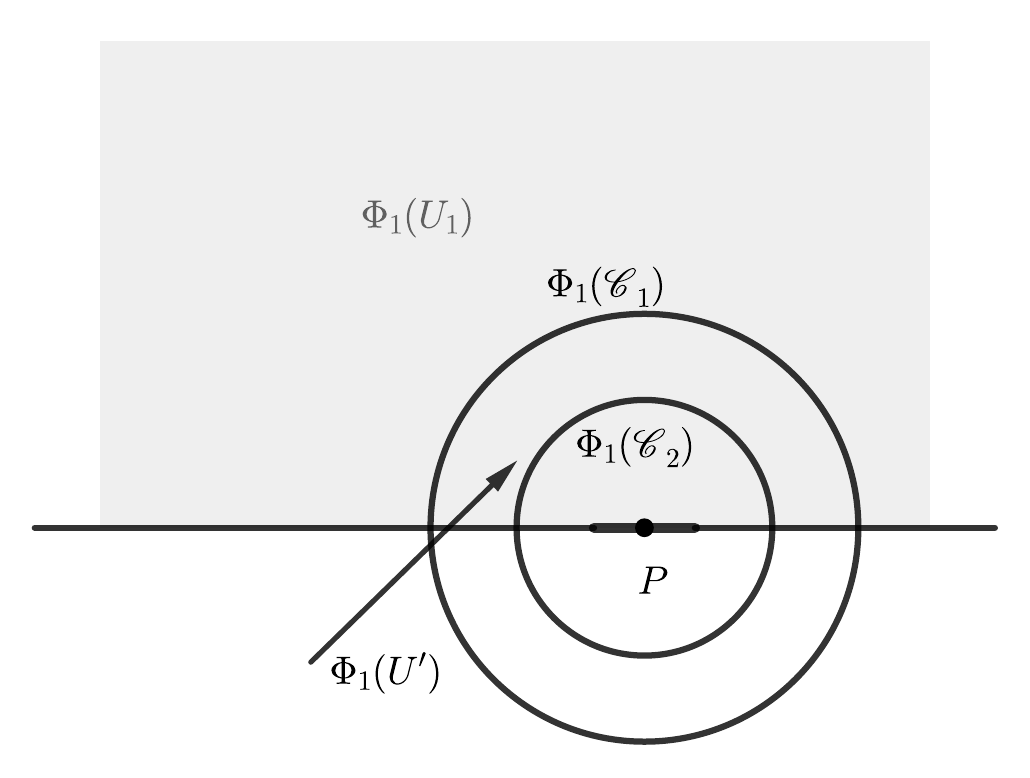}
    \caption{local structure near $P$}
    \label{fig5}
\end{figure}
\subsection{Explicit Computation and Estimation in $U^\prime$}
We may now turn to some explicit computations and we restrict the RBM in $U^\prime$ for now. Notice that $\Phi_1$ maps $\mathscr C_1$ and $\mathscr C_2$ into two semi-spheres by their definition. We denote them by $\mathscr{S}_1$, $\mathscr S_2$, respectively. We also define our RBM in $\Omega$ by $\Phi_1$ so we may consider the original RBM satisfies $(1)$ and associated assumptions in $\Phi_1(U^1)$. 

Now by similar reason as Proposition \ref{reflectionp} we may reflect $\Phi_1(U^1)\cap\overline{B_{P}(r_1)}$ along $\mathbb{R}_0^{n-1}=\{(0,x_2,...,x_n):x_i\in\mathbb{R}\}$ and then the RBM in $\Phi_1(U^1)\cap \overline{B_{P}(r_1)}$ is equivalent to the Brownian Motion $W_t$ stated in equation $(1)$ in $\overline{B_{P}(r_1)}\setminus\Phi_1(\mathrm{tg})$. 

Now by adapting the proof in the case of $\mathbb{C}$ with mean value inequality, we may assume that there exists constants $k_1$, $k_2$ depends on $\Phi_1$ (hence $\Omega$) such that $B_{P}(k_1\epsilon)\cap\mathbb{R}_0^{n-1}\subset\Phi_1(\mathrm{tg})\subset B_{P}(k_2\epsilon)\cap\mathbb{R}_0^{n-1}$ for all $\epsilon$ small enough, which allows us to have 
\begin{equation}\label{eq3}\mathbb{P}_{k_1}\leq \mathbb{P}\left(B_\tau^{B^{x_0}_{\tau_1}}\in\mathrm{tg}\right)\leq \mathbb{P}_{k_2}
\end{equation}
where $B_t$ is restricted in $U^\prime$ and
 $$\mathbb{P}_{k_i}:=\mathbb{P}\left(\Phi_1(B)_\tau^{\Phi_1\left(B^{x_0}_{\tau_1}\right)}\in B_{P}(k_i\epsilon)\cap\mathbb{R}_0^{n-1}\right)$$
(we have abused notation $\tau$). So now we may turn the problem into the asymptotic hitting probability for the following problem: Given the RBM $X^{y_0}_t$ starts at a random point $y_0$ on $\mathscr{C}_1$, with absorbing boundary $\mathscr{C}_2$, target at $B_{P}(k_i\epsilon)\cap\mathbb{R}_0^{n-1}:=\mathrm{tg}_i$ for $i=1,2$, and reflecting boundary at $(B_P(r_1)\setminus B_{P}(k_i\epsilon))\cap\mathbb{R}_0^{n-1}$. The target hitting probability is $\mathbb{P}_{k_i}$ for each $i$. The random point $y_0$ is distributed according to the hitting density $\mathrm{d}{H(\mathscr{C}_2)}$ of $B^{x_0}_t$ at $\mathscr{C}_2$ condition on $\tau_1<\infty$. Above all, we are interested in 
\begin{align*}
    & \int_{x\in \mathscr{C}_2}\mathbb{P}\left(\Phi_1(B)_\tau^{x}\in B_{P}(k_i\epsilon)\cap\mathbb{R}_0^{n-1},\Phi_1(B)_{[\tau_1,\tau]}^{x_0}\cap \mathscr{C}_1=\emptyset\right)\mathrm{d}H(\mathscr{C}_2)\\
    = & \int_{x\in \mathscr{C}_2}\mathbb{P}\left(\Phi_1(X)_\tau^{x}\in \mathrm{tg}_i\right)\mathrm{d}H(\mathscr{C}_2).
\end{align*}
We write $\mathbb{P}^x_{k_i}=\mathbb{P}\left(\Phi_1(X)_\tau^{x}\in \mathrm{tg}_i\right)$.

By reflection $\mathbb{P}^{y_0}_{k_i}$ is the probability that the Brownian motion $W^{y_0}_t$, where $W_t$ is stated in equation $(1)$ and restricted in $\overline{B_{P}(r_1)}\setminus(B_{P}(k_i\epsilon)\cap\mathbb{R}_0^{n-1})$ with target $B_{P}(k_i\epsilon)\cap\mathbb{R}_0^{n-1}$ and absorbing boundary $\partial B_P(r_1)$, hits the target at escaping time of $B_{P}(r_1)$.

Now we consider the sphere $\partial B_P(k_1\epsilon)$. Then let $$\tau_2=\inf\left\{t\geq 0:W^{y_0}_t\notin B_P(r_1)\setminus \overline{B_P(k_1\epsilon)} \right\}$$ writing $\Phi_1\left(B^{x_0}_{\tau_1}\right)=y_0$ , again by strong Markov Property we have that 
\begin{equation}\label{eq4}
    \mathbb{P}^{y_0}_{k_1}=\mathbb{P}\left(W^{y_0}_{\tau_2}\in\partial B_P(k_1\epsilon)\right)\times \mathbb{P}\left(W_\tau^{W^{y_0}_{\tau_2}}\in B_{P}(k_1\epsilon)\cap\mathbb{R}_0^{n-1}\right).
\end{equation} 
Now we observe again that $\mathbb{P}\left(W^{y_0}_{\tau_2}\in\partial B_P(k_1\epsilon)\right)$ is the probability of the Brownian Motion $W^{y_0}_t$ inside the $n$ dimensional open annulus $B_P(r_1)\setminus \overline{B_P(k_1\epsilon)}$ with absorbing boundary at $\partial B_P(r_1)$ and target at $\partial B_P(k_1\epsilon)$. Now, we may explicitly write out the solution to this target-hitting probability. Since we know that the target hitting probability $\mathbb{P}\left(W^{y_0}_{\tau_2}\in\partial B_P(k_1\epsilon)\right)=u_1(y_0)$ satisfies the Dirichlet problem 
$$
\begin{cases}\Delta u_1(y)=0& \text { if } y \in B_P(r_1)\setminus \overline{B_P(k_1\epsilon)} \\ u_1(y)=1 & \text { if } y\in \partial B_P(k_1\epsilon) \\ u_1(y)=0 & \text { if } y\in \partial B_P(r_1) \end{cases}
$$
as the solution to this problem is unique, and thus we may easily verify that 
$$\mathbb{P}\left(W^{y_0}_{\tau_2}\in\partial B_P(k_1\epsilon)\right)=u_1(y_0)=\frac{G_{n,P}(y_0)-G_{n,P}(P+\bm{r_1})}{G_{n,P}(P+\bm {k_1\epsilon})-G_{n,P}(P+\bm{r_1})}.$$
where $\bm{r_1}=(r_1,0,0,...,0)\in\mathbb{R}^n$ and $\bm{k_1\epsilon}=(k_1\epsilon,0,0,...,0)\in\mathbb{R}^n$, is the solution to the above problem.

We now need to estimate $\mathbb{P}\left(W_\tau^{y_1}\in B_{P}(k_1\epsilon)\cap\mathbb{R}_0^{n-1}\right)$ for $y_1\in\partial B_P(k_1\epsilon)$. We claim the following estimation:
\begin{proposition}
    There exists a constant $c_3^{(1)}=c_3^{(1)}(k_1)>0$ such that $$\mathbb{P}\left(W_\tau^{y_1}\in B_{P}(k_1\epsilon)\cap\mathbb{R}_0^{n-1}\right)\geq c_3^{(1)}$$ for any $y_1\in\partial B_P(k_1\epsilon)$ and small $\epsilon$.
\end{proposition}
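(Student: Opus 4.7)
The plan is to combine a Brownian scaling with a compactness and strong-maximum-principle argument on a fixed-scale reference problem. First I would apply the scaling $\tilde W_t := (W_{\epsilon^2 t} - P)/\epsilon$, which sends standard Brownian motion in $\overline{B_P(r_1)} \setminus (B_P(k_1\epsilon) \cap \mathbb{R}_0^{n-1})$ started at $y_1$ to standard Brownian motion $\tilde W$ in $\overline{B_0(R_\epsilon)} \setminus (B_0(k_1) \cap \mathbb{R}_0^{n-1})$ started at $\tilde y_1 := (y_1 - P)/\epsilon \in \partial B_0(k_1)$, with $R_\epsilon := r_1/\epsilon$. Since this scaling preserves hitting probabilities, the claim reduces to producing a uniform lower bound on
\[
h_\epsilon(\tilde y_1) := \mathbb{P}_{\tilde y_1}\bigl(\tilde W \text{ hits } B_0(k_1) \cap \mathbb{R}_0^{n-1} \text{ before } \partial B_0(R_\epsilon)\bigr)
\]
as $\tilde y_1$ ranges over the compact sphere $\partial B_0(k_1)$ and $\epsilon$ is small.

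Next I would eliminate the $\epsilon$-dependence by monotonicity in the outer radius. Fix $R_0 := 2k_1$, so that $R_\epsilon > R_0$ whenever $\epsilon < r_1/(2k_1)$. By the obvious event inclusion (every path that hits the target before $\partial B_0(R_0)$ also hits it before $\partial B_0(R_\epsilon)$), $h_\epsilon(\tilde y_1) \geq h_0(\tilde y_1)$, where $h_0$ is the analogous probability with outer sphere $\partial B_0(R_0)$. It therefore suffices to show $\inf_{\tilde y_1 \in \partial B_0(k_1)} h_0(\tilde y_1) > 0$ for this fixed-scale problem.

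For that I would exploit the harmonic representation of $h_0$ on the slit domain $D := B_0(R_0) \setminus \overline{B_0(k_1) \cap \mathbb{R}_0^{n-1}}$: classical potential theory gives that $h_0$ is harmonic in $D$ with boundary data $0$ on $\partial B_0(R_0)$ and $1$ on the target disk. Decompose the compact sphere as $\partial B_0(k_1) = S_{\mathrm{in}} \sqcup S_{\mathrm{eq}}$, where $S_{\mathrm{in}}$ consists of points strictly off the hyperplane $\mathbb{R}_0^{n-1}$ (which lie in the interior of the connected open set $D$) and $S_{\mathrm{eq}}$ is the equatorial $(n-2)$-sphere sitting on the rim of the target disk. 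On $S_{\mathrm{in}}$, the strong maximum principle applied to the non-constant harmonic function $h_0 \in [0,1]$ gives $h_0 > 0$. On $S_{\mathrm{eq}}$ one argues that $h_0$ extends continuously with value $1$. Compactness and continuity then deliver a strictly positive infimum, which we call $c_3^{(1)}(k_1)$.

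The main obstacle will be the regularity of the points of $S_{\mathrm{eq}}$ for the slit domain $D$, since these points sit on the singular $(n-2)$-dimensional stratum of $\partial D$. The key fact to establish is that the flat target disk has positive $(n-1)$-dimensional Lebesgue measure in every neighborhood of any rim point, so by the Wiener criterion (exterior cone in the hyperplane for $n \geq 3$, exterior half-line for $n = 2$) each rim point is regular in the classical sense; equivalently, Brownian motion started at a rim point hits the target disk at time $0$ almost surely, which follows from coupling $(W^1_t)_{t \geq 0}$ with a standard $1$-dimensional Brownian motion that returns to $0$ arbitrarily fast while the transverse components $(W^2_t, \ldots, W^n_t)$ remain within the disk with positive probability. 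With this regularity in hand, $h_0$ is continuous up to $\partial D$ and the argument is complete.
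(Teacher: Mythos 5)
Your argument is correct but follows a genuinely different route from the paper's. Both exploit scale invariance of Brownian motion, but the paper keeps the target at unit scale and shrinks the outer boundary: it replaces $\partial B_P(r_1)$ by the much closer sphere $\partial B_{y_1}(4k_1\epsilon)$ (which only lowers the hitting probability), recenters at $y_1$, rescales by $4k_1\epsilon$, and then produces an explicit lower bound by an elementary two-step geometric construction inside the unit ball: hit a fixed small sphere sitting at the center of an $n$-cube whose floor face lies in the rescaled target, then exit that cube through the floor, the two steps concatenated via the strong Markov property. You instead recenter at $P$, rescale by $\epsilon$, use monotonicity in the outer radius to pass to a single $\epsilon$-free slit domain $B_0(2k_1)\setminus\overline{B_0(k_1)\cap\mathbb{R}_0^{n-1}}$, and close with potential theory: harmonicity of the hitting probability, the strong maximum principle for strict interior positivity, boundary regularity for continuous extension with value $1$ at the equatorial rim, and compactness of the starting sphere $\partial B_0(k_1)$. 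The paper's proof is self-contained and avoids boundary-regularity theory at the cost of the hands-on cube geometry and a uniformity check on the cube's location; yours is shorter once regularity of the rim is granted, and handles starting points on and near the rim uniformly with no case split. The one point to firm up is the regularity claim for $n\geq 3$: near a rim point the complement of the domain contains only a flat $(n-1)$-dimensional half-disk, not a full-dimensional exterior cone, so the textbook exterior-cone test does not literally apply. What does work is the Wiener criterion combined with the fact that a flat $(n-1)$-disk of radius $r$ in $\mathbb{R}^n$ has Newtonian capacity of order $r^{n-2}$, which makes the Wiener series diverge; your alternative zero-one-law sketch reaches the same conclusion but should be tied off with Blumenthal's law rather than left as an informal coupling.
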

\begin{proof}
    Consider probability that $W_\tau^{y_1}\in B_{P}(k_1\epsilon)\cap\mathbb{R}_0^{n-1}$ but without hitting $\partial B_{y_1}(4k_1\epsilon)$ for $\epsilon$ so small that $\overline{B_P(5k_1\epsilon)}\subset B_P(r_1)$. We notice that if we let $H^{y_1}_t$ be the same Brownian motion as $W^{y_1}_t$ in $B_P(r_1)\setminus (B_{P}(k_1\epsilon)\cap\mathbb{R}_0^{n-1})$ but with absorbing boundary $\partial B_{y_1}(4k_1\epsilon)$ and target $B_{P}\left(\displaystyle{k_1\epsilon}\right)\cap\mathbb{R}_0^{n-1}$ instead, we will have $$\mathbb{P}\left(W_\tau^{y_1}\in B_{P}(k_1\epsilon)\cap\mathbb{R}_0^{n-1}\right)\geq \mathbb{P}\left(H_\tau^{y_1}\in B_{P}\left(\displaystyle{k_1\epsilon}\right)\cap\mathbb{R}_0^{n-1}\right).$$
    \begin{figure}
        \centering
        \includegraphics[scale=0.3]{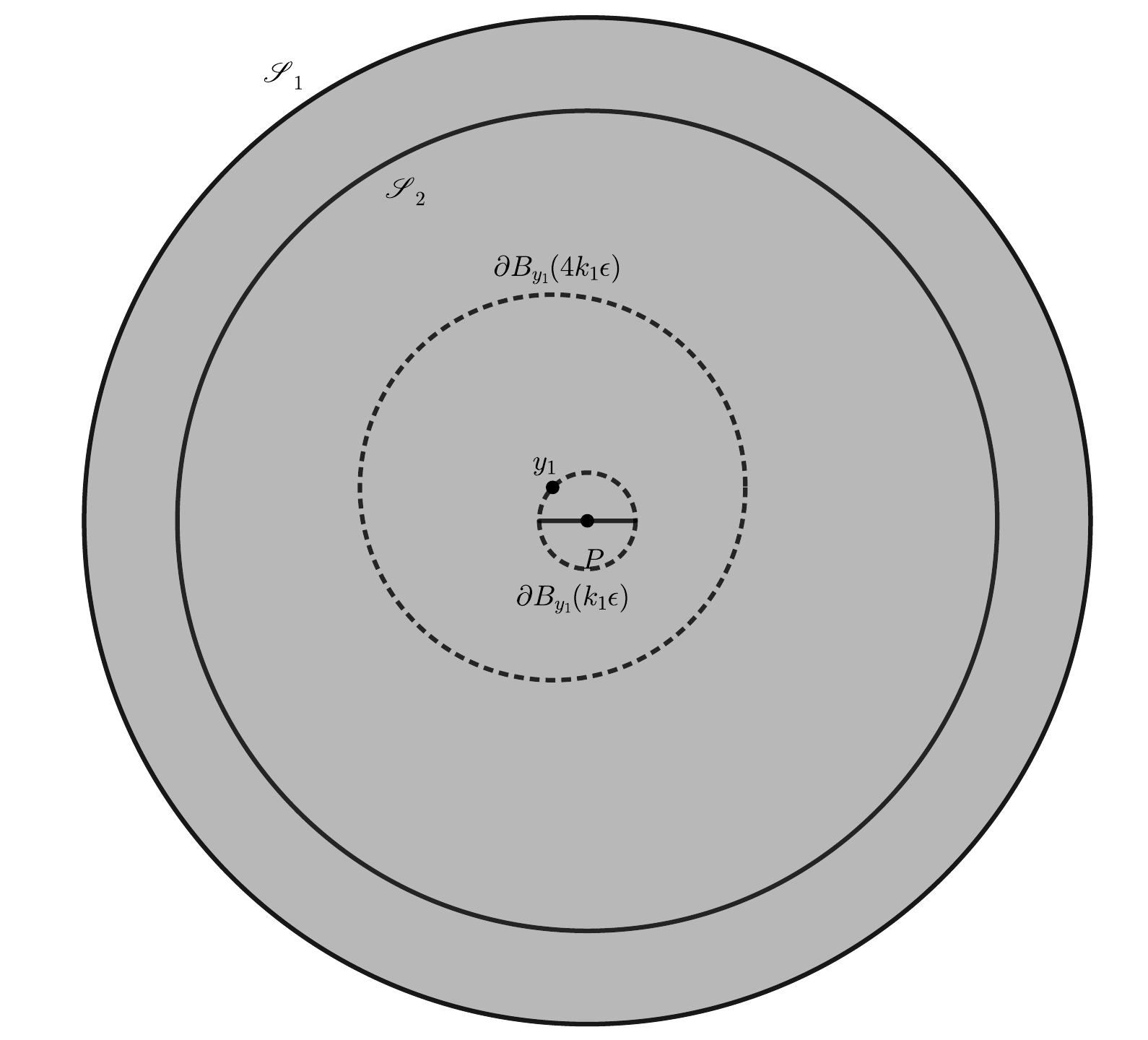}
        \caption{Local structure near $P$}
        \label{fig6}
    \end{figure}
    By shifting and scaling invariance of Brownian motion, let $$Z_t:=\frac{H^{y_1}_{(4k_1\epsilon)^2t}-y_1}{4k_1\epsilon}$$ be another standard Brownian motion started at $0$, roams inside $B_0(1)$, with original target of $H^{y_1}_t$ being shifted and scaled to a $n-1$ dimensional open ball with diameter $\displaystyle \frac{1}{2}$ (which will be denoted by $\mathrm{tg}^\prime$) and absorbing boundary being shifted and scaled to $\partial B_0(1)$. Let $\tau_3=\inf\{t\geq 0: Z_t\notin B_0(1)\setminus\mathrm{tg}^\prime\}$ Then we have $$\mathbb{P}\left(H_\tau^{y_1}\in B_{P}\left(\displaystyle{k_1\epsilon}\right)\cap\mathbb{R}_0^{n-1}\right)=\mathbb{P}(Z_{\tau_3}\in\mathrm{tg}^\prime).$$
We notice that $\mathrm{dist}(0,\mathrm{tg}^\prime)<\displaystyle\frac{1}{4}$.  Consider the $n-1$ dimensional cube $Q_1$ inscribed in $\mathrm{tg}^\prime$ and we notice that there exists a positive constant $c_4$ such that it only depends on $n-1$ and the side length of $Q_1$ is ${c_4}$. Now consider building up another $n$ dimensional cube $Q_2$ with one face exactly $Q_1$ and $0$ lies on the same side of $Q_2$ compared to $\mathbb{R}^{n-1}_0$. Then $Q_2$ is well defined and lies inside $B_0(1)$. Let the center of $Q_2$ be $Q$ and construct sphere $\partial B_{Q}\left(\displaystyle\frac{c_4}{4}\right)$, as shown in figure \ref{fig7}. 
\begin{figure}[htbp]
    \centering
    \includegraphics[scale=0.4]{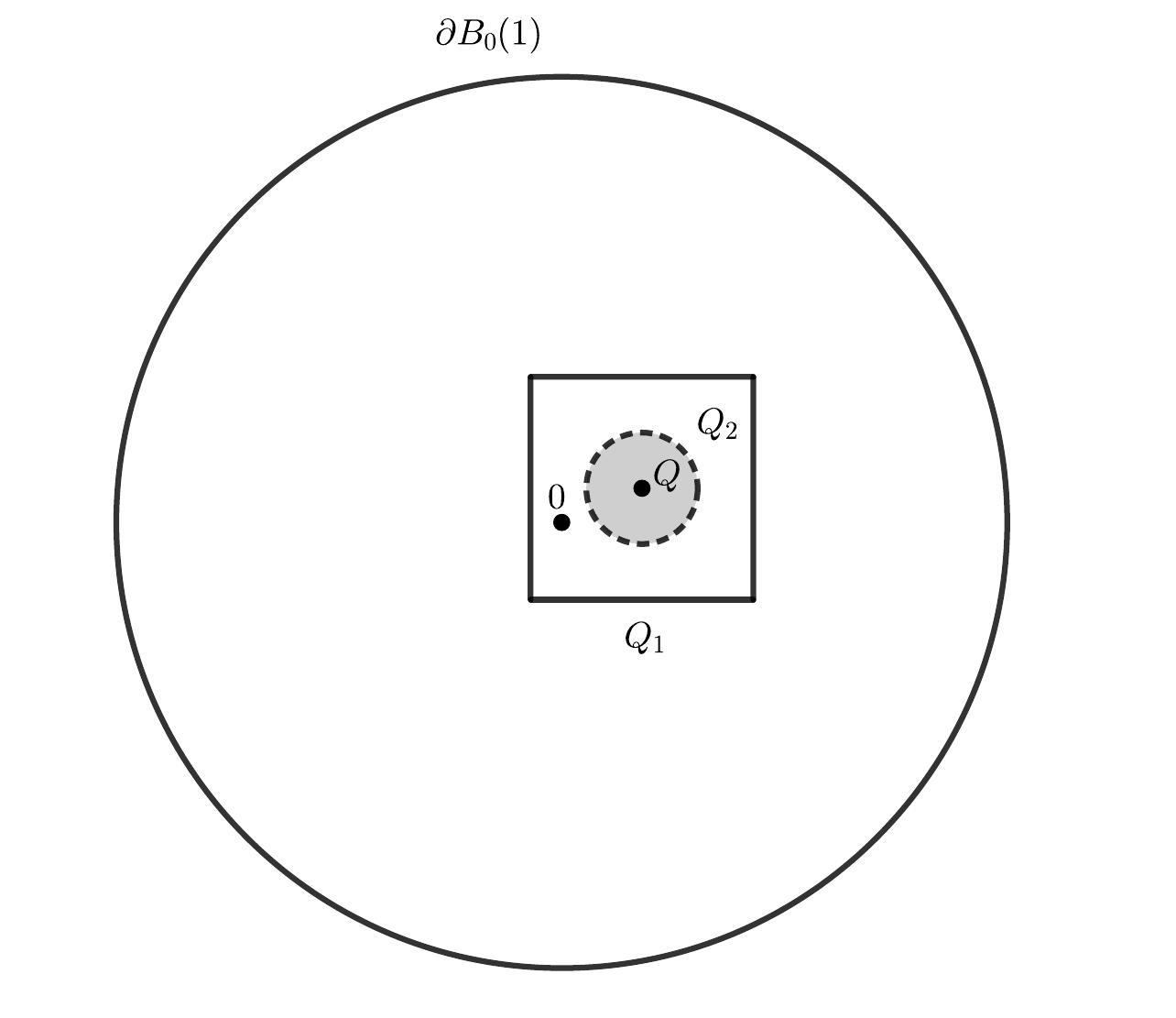}
    \caption{$Q$, $Q_1$ and $Q_2$}
    \label{fig7}
\end{figure}
Let $\mathbb{P}_3$ be the probability that $Z_t$ hits $\partial B_{Q}\left(\displaystyle\frac{c_4}{4}\right)$ before hitting $\partial B_0(1)$. Then as $c_4$ is independent of $\epsilon$ there exists constant $c_5>0$ only depends on $n$ and $k_1$ such that $\mathbb{P}_3\geq c_5$. Let $\mathbb{P}_4$ be the probability that starts on a point on $\partial B_{Q}\left(\displaystyle\frac{c_4}{4}\right)$ and exits $Q_2$ at $Q_1$. Then by rescaling invariance, we also see there exists $c_6>0$ that also only depends on $n$ and $k_1$ such that $\mathbb{P}_4\geq c_6$. So $$\mathbb{P}(Z_{\tau_3}\in\mathrm{tg}^\prime)\geq \mathbb{P}_3\times\mathbb{P}_4\geq c_5c_6>0.$$ Thus we have $$\mathbb{P}\left(W_\tau^{y_1}\in B_{P}(k_1\epsilon)\cap\mathbb{R}_0^{n-1}\right)\geq c_3^{(1)}:=c_5c_6.$$
\end{proof}
We may obtain the same result for the case of $k_2$. Hence, let the hitting density of $W^{y_0}_t$ to $\partial B_P(k_1\epsilon)$ condition on $\tau_2<\infty$ to be $\mathrm{d}H_\epsilon$. Integrate we have \begin{align*}
   \mathbb{P}\left(W_\tau^{W^{y_0}_{\tau_2}}\in B_{P}(k_i\epsilon)\cap\mathbb{R}_0^{n-1}\right)= & \int_{y_1\in\partial B_P(k_i\epsilon)}\mathbb{P}\left(W_\tau^{y_1}\in B_{P}(k_i\epsilon)\cap\mathbb{R}_0^{n-1}\right)\mathrm{d}H_\epsilon\\
   \geq & c_3^{(i)}\int_{y_1\in\partial B_P(k_i\epsilon)}\mathrm{d}H_\epsilon=c_3^{(i)}.
\end{align*}
\subsection{Proof of Main Result} Now we may finish the proof. By $(4)$ and above proof we have $$u_i(y_0) c^{(i)}_3\leq\mathbb{P}^{y_0}_{k_i}\leq u_i(y_0)$$ for $i=1,2$.  We also notice that $$
u_i(y_0)= \begin{cases} \displaystyle\frac{r_1^{2-n}-r_2^{2-n}}{r_1^{2-n}-(k_i\epsilon)^{2-n}} & \text { if } n \in \mathbb{N} \backslash\{1,2\} \\ \displaystyle\frac{ \log(r_2)-\log(r_1)}{ \log(k_i\epsilon)- \log(r_1)} & \text { if } n=2\end{cases}
$$
So let $\epsilon\to 0$ we have $u_i$ is comparable with general Newtonian potential, and $y_0$ does not influence the asymptotic behavior. Hence $$\mathbb{P}_{k_i}= \int_{x\in \mathscr{C}_2}\mathbb{P}\left(\Phi_1(X)_\tau^{x}\in \mathrm{tg}_i\right)\mathrm{d}H(\mathscr{C}_2)$$ is comparable with $u_i$. So by $(2),(3)$ and Proposition \ref{estimationPi} we have $$c_1 c_3^{(1)} \mathbb{P}(\tau_1<\infty)u_1(y_0)\leq \mathbb{P}\left(B^{x_0}_\tau\in\mathrm{tg}\right)\leq c_2 \mathbb{P}(\tau_1<\infty)u_2(y_0).$$ We note that $\mathbb{P}\left(\tau_1<\infty\right)>0$ as position of $\mathscr{C}_2$ is independent of $\epsilon$. Now, for any general starting point $x\in \Omega$, the probability that it hits $U^1$ is a positive constant determined by $x$ and $\Omega$, and hence we arrive at the conclusion.

\section{Appendix: Proofs of technical details}

\begin{proposition}
    Suppose $\mathscr{V}=\left\{V^0, V^1,...\right\}$ is another cover of $\Omega$ defined in \ref{Nicedomain} and $(\Psi_k)_{k\geq 1}$ be another collection of associated maps. Suppose for some $k$ and $m$ that $V_m\cap U_k$ is non-empty. Then the transition map $$\Phi_k\circ(\Psi_m)^{-1}: \Psi_m(V_m\cap U_k)\to \Phi_k(V_m\cap U_k)$$ maps process $\Psi_m(X^x_t)$ to the process $\Phi_k(X^x_t)$.
    \label{tech1}
\end{proposition}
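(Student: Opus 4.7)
The plan is to reduce the claim to the uniqueness statement of Proposition \ref{Constructioninupperhs}, applied in the upper half space to the overlap $V_m\cap U_k$. First I would observe that the SULD axioms force the transition map $T:=\Phi_k\circ\Psi_m^{-1}$ to be a smooth diffeomorphism between two open subsets of $\overline{\mathbb{H}_n}$ that carries boundary to boundary, and that by condition (3) of Definition \ref{Nicedomain} the Jacobian of $T$ sends the inward normal $(1,0,\ldots,0)$ to a positive scalar multiple of itself along $\partial\mathbb{H}_n$. The uniform regularity in condition (4) guarantees that all constants appearing in the forthcoming It\^o calculation are uniformly bounded on the overlap, so that a genuine semimartingale identity can be written down.

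The key step will then be an It\^o-type computation. On the patch where $\Psi_m$ is used, $\Psi_m(X_t^x)$ is by construction the reflected Brownian motion of Proposition \ref{Constructioninupperhs}, obtained by composing the ambient process with $\Psi_m$ as in the proof of Proposition \ref{SULDconstruction}. Applying It\^o's formula to $T$ composed with $\Psi_m(X_t^x)$, the bulk drift/martingale part of the transformed SDE depends only on the Jacobian and Laplacian of the composition $\Phi_k=T\circ\Psi_m$, and hence reproduces the bulk part of the SDE one would obtain by running the $\Phi_k$-construction directly. For the boundary contribution, the reflection direction is carried by $\mathrm{d}T$ to a positive multiple of $(1,0,\ldots,0)$ thanks to the normal-preservation clause, and the scalar factor is absorbed into the local time, yielding again the normal-reflection SDE in $\Phi_k$-coordinates.

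With both $T(\Psi_m(X_t^x))$ and the directly constructed $\Phi_k(X_t^x)$ then satisfying the same reflection SDE in the same open subset of $\overline{\mathbb{H}_n}$ with the same initial condition $\Phi_k(x)$, I would invoke the uniqueness part of Proposition \ref{Constructioninupperhs} (applied up to the first exit time of $\Phi_k(V_m\cap U_k)$ and then iterated across the countable cover) to conclude that the two processes agree almost surely. The main obstacle will be the rigorous bookkeeping for the boundary reflection term: one must identify the local time of the transformed process at $\partial\mathbb{H}_n$ with a scalar multiple of the local time of $\Psi_m(X_t^x)$, and it is precisely for this identification that the normal-preservation clause and the uniform Lipschitz and Hessian bounds in parts (3) and (4) of the SULD definition are needed. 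Once this bookkeeping is in place, the rest is a direct application of Anderson's uniqueness in the half space.
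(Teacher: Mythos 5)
Your proposal takes essentially the same approach as the paper: apply It\^o's formula to the transition map, use the chain rule to check that the bulk drift and diffusion terms reproduce those of the $\Phi_k$-chart SDE, and use the normal-preservation condition (3) of Definition~\ref{Nicedomain} to show the boundary reflection term transforms into a normal reflection after absorbing a positive scalar into the local time. The paper carries out this It\^o/chain-rule computation explicitly and reads off an identity of stochastic differentials, whereas you phrase the final step as an appeal to uniqueness in the half space; this is the same argument packaged slightly differently.
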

\begin{proof}
   Notice that for $X^x_t$ inside domain $\Omega$, it satisfies Stochastic differential equation $\mathrm{d}X^x_t=\mathrm{d}W_t$. Write $\Psi_m=f=(f^1,f^2,...,f^n)$ and $\Phi_k=g=(g^1,...,g^n)$, $W_t=(W^1_t,...,W^n_t)$ where $W^i_t$ are independent standard 1 dimensional Brownian motion. Apply It\^o's formula, \begin{equation}
        \mathrm{d}f^i(X_t)=\left(\sum^n_{k=1}\frac{\partial f^i}{\partial x_k}\mathrm{d}W^k_t\right)+\frac{1}{2}\Delta f^i\mathrm{d}t+\mathbf{1}_{\partial\Phi_k(U_k)\cap\overline{\mathbb{H}}}(f(X_t)) \nabla f^i (X_t)\cdot \gamma \mathrm{d} \xi_t
        \label{eqfi}
    \end{equation}
   where $\delta_{1i}=0$ for $i\neq 1$ and $1$ for $i=1$. We abbreviate the starting position to make our equation clean. We also abbreviate $$R(f,i,\xi_t,X_t)=\mathbf{1}_{\partial\Phi_k(U_k)\cap\overline{\mathbb{H}}}(f(X_t)) \nabla f^i (X_t)\cdot \gamma \mathrm{d} \xi_t$$ where $\gamma$ is the unit vector field normal to boundary of $V_m\cap U_k$. Write $f(X_t)$ as $F_t=(F^1_t, F^2_t,...,F^n_t)$, we need to show that $g\circ f^{-1}$ maps process $F_t$ to same process in $g(V_m\cap U_k)$. Apply It\^o's formula again we have 
    \begin{equation}
        \mathrm{d}\left(g\circ\left(f^{-1}\right)\right)^i(F_t)=\sum_{k=1}^n \frac{\partial\left(g \circ\left(f^{-1}\right)\right)^i}{\partial x_k} \mathrm{d} F_t^k+\frac{1}{2} \sum^n_{k,m=1} \frac{\partial^2\left(g \circ\left(f^{-1}\right)\right)^i}{\partial x_k \partial x_m} \mathrm{d}\langle F^m,F^k\rangle_t
        \label{eqgf-1i}
    \end{equation}
    plugin \eqref{eqfi} and rearrange \eqref{eqgf-1i} we have
$$\begin{aligned}
   & \mathrm{d}\left(g\circ\left(f^{-1}\right)\right)^i(F_t) =\sum_{k=1}^n \sum_{p=1}^n \frac{\partial g^i}{\partial x_p} \frac{\partial\left(f^{-1}\right)^p}{\partial x_k}\sum_{j=1}^n \frac{\partial f^k}{\partial x_j} \mathrm{d} W_t^j \\
+& \frac{1}{2} \left(\sum_{k,m=1}^n \sum_{a=1}^n \frac{\partial^2\left(g \circ\left(f^{-1}\right)\right)^i}{\partial x_k \partial x_m} \frac{\partial f^k}{\partial x_a} \frac{\partial f^m}{\partial x_a}+\Delta f^k\sum_{p=1}^n \frac{\partial g^i}{\partial x_p} \frac{\partial\left(f^{-1}\right)^p}{\partial x_k} \right)\mathrm{d}t \\
+ &\sum^n_{j=1}\frac{\partial (g\circ f^{-1})^i}{\partial x_j}R(f,j,\xi_t,X_t)
\end{aligned}$$
where $\left(f^{-1}\right)^p$ is interpreted as the $p^{\mathrm{th}}$ component of $f^{-1}$. Notice that by written in multiplication form of the Jacobian matrix, the first term on RHS is just $\sum^n_{k=1}\frac{\partial g^i}{\partial x_k}\mathrm{d}W^k_t$ and the second term is $\frac{1}{2}\Delta g^i\mathrm{d}t$. We also note that $$\gamma\cdot\sum^n_{j=1}\frac{\partial (g\circ f^{-1})^i}{\partial x_j}\nabla f^k=\gamma\cdot\nabla g^i,$$ which implies $$\sum^n_{j=1}\frac{\partial (g\circ f^{-1})^i}{\partial x_j}R(f,j,\xi_t,X_t)=R(g,i,\xi_t,X_t)$$
Thus we have verified that
$$\mathrm{d}g^i(X^x_t)=\sum^n_{k=1}\frac{\partial g^i}{\partial x_k}\mathrm{d}W^k_t+\frac{1}{2}\Delta g^i\mathrm{d}t+R(g,m,i,\xi_t,X_t)= \mathrm{d}\left(g\circ\left(f^{-1}\right)\right)^i(F_t)$$ so consistency follows.
\end{proof}
\begin{proposition}
    $\mathbb{D}$ is a SULD. Moreover, any domain $\Omega$ defined in Definition \ref{analyticdomainC} is a SULD.
    \label{tech2}
\end{proposition}
\begin{proof}
   See Appendix,  Choose $\displaystyle{f_j(z)=(i)^j\frac{z-i}{z+i}}$ for $j=1,2,3,4$. and we note that each $f_j$ is conformal between an open neighborhood of a subset $\mathbb{H}^\prime$ of $\mathbb{H}$ around $0$, denoted by $\mathbb{H}^{\prime\prime}$, and an open neighborhood of $\mathbb{D}\cap (i)^{j+1}\mathbb{H}$, denoted by $\mathbb{D}_j^\prime$, $0$ is sent to $-i^{j}$ and each $\mathbb{D}^\prime_j=i\mathbb{D}^\prime_{j-1}$. With out loss of generality we may assume that $d=\inf\{|x-y|:x\in \partial\mathbb{D}\cap -\mathbb{H},y\in \partial \mathbb{D}^\prime_1\}>0$ and that each $f_i$ is holomorphic on some neighborhood of $\overline{\mathbb{H}^{\prime\prime}}$ with a holomorphic inverse. Now for $z\in\partial\mathbb{D}\cap (i)^{j+1}\mathbb{H}$ we take $B^j_z=B^j_z\left(\frac{d}{2}\right)$ and by compactness of $\partial\mathbb{D}$ we can find $(B^j_{z_k}:1\leq k\leq n_j)$ such that $\partial\mathbb{D}\cap (i)^{k+1}\mathbb{H}\subset\cup^{n_j}_{k=1} B^j_{z_k}.$ Put  $\mathscr{U}=\{\mathbb{D}\}\cup\cup^4_{j=1}\{B^j_{z_k}:1\leq k\leq n_j\}$ each map $\Phi_k$ be the correspondent $(f_j)^{-1}$. So $\mathbb{D}$ satisfies properties 1-3. Now as each $f_j$ is holomorphic on neighborhood of $\overline{\mathbb{H}^{\prime\prime}}$ we conclude that $\left|\left(f^{-1}_j\right)^\prime\right|$ is bounded above by positive constants $d_1$ independent of $j$ as we only have finitely many $f_j$. So $$\label{upperestimate}\left|\left(f^{-1}_j\right)^\prime(x)-\left(f^{-1}_j\right)^\prime(y)\right|=\left|\int^x_y\left(f^{-1}_j\right)^\prime(z)\mathrm{d}z\right|\leq d_1|x-y|$$ and on the other hand, notice that in particular $(i)^j\notin\mathbb{D}_j^\prime$ so there is a $d_2>0$ such that $\mathrm{dist}((i)^j,\mathbb{D}_j^\prime)\geq d_2$. Now $$\left|f^{-1}_j(x)-f^{-1}_j(y)\right|=\left|\frac{ix+(i)^{j+1}}{(i)^j-x}-\frac{iy+(i)^{j+1}}{(i)^j-y}\right|=\left|\frac{2(x-y)}{((i)^j-x)((i)^j-y)}\right|\geq \frac{2}{d^2_2}|x-y|.$$
  so $f^{-1}_j$ satisfies a Lipchitz condition. As we only have 4 $j$ we have the uniform Lipchitz condition. Also, from here, all partial derivatives are nicely bounded by the total derivative of $f^{-1}_j$. By Cauchy-Riemann Equation we see $J_jJ_j^T=c\mathbb{I}$ where $c=|(f^{-1}_j)^\prime|^2>0$, $J_j$ is Jacobian of $f^{-1}_j$ and $\mathbb{I}$ is the $2\times 2$ identity matrix. Also, $f^{-1}_j$ are all harmonic, so property 4 is satisfied. Thus $\mathbb{D}$ is a SULD.
   For the case of general $\Omega$ in Definition \ref{analyticdomainC}, by Riemann mapping theorem, we may choose a conformal bijection $f$ maps $\mathbb{D}$ onto $\Omega$. Now we introduce a lemma 
\begin{lemma}
     Let $f$ be a conformal bijection between $\mathbb{D}$ and $\Omega$ where $\Omega$ is defined in \ref{analyticdomainC}. Then we may extend $f$ to a conformal bijection $F$ between $\Omega^\prime$, an open neighborhood of $\Omega$ and $\mathbb{D}^\prime$, an open neighborhood of $\mathbb{D}$, and $F$ agree with $f$ on $\mathbb{D}$. \cite{conformalmapsandgeo} (Theorem 2.25)
     \label{analyticextensionlemma}
     \end{lemma}
So we may extend $f$ to be such $F$ and for each $x\in\partial\Omega$ we have $\overline{B_x(r_x)}\subset \Omega^\prime$. Then as $\partial\Omega$ is compact we can find an integer $N$ such that $\partial\Omega=\cup^n_{k=1}B_{x_k}(r_{x_k})$. Let $g$ be the inverse of $F$. Write $B_{x_k}(r_{x_k})=B_k$ and consider $B_{j,m,k}=g(B^j_{z_m})\cap B_k$ with associated map $\Phi_{j,m,k}=g\circ f_j^{-1}$. Then take $\mathscr{U}={\Omega}\cup\left(\cup^4_{j=1}\cup^{n_j}_{m=1}\cup^n_{k=1}B_{j,m,k}\right)$ we see $\Omega$ satisfies properties 1-3. Now as $g^{-1}$ is also conformal its derivative is bounded by some constant $c_{j,m,k}$ on $\overline{B_{j,m,k}}$ and so by similar argument as \eqref{upperestimate} we have $$|x-y|=|g^{-1}(g(x))-g^{-1}(g(y))|\leq c_{j,m,k}|g(x)-g(y)|\leq \max_{j,m,k}c_{j,m,k}|g(x)-g(y)|.$$ Thus $$\frac{2}{d^2_2}|x-y|\leq \left|f^{-1}_j(x)-f^{-1}_j(y)\right|\leq \left(\max_{j,m,k}c_{j,m,k}\right)\left|\left(g\circ f^{-1}_j\right)(x)-\left(g\circ f^{-1}_j\right)(y)\right|$$ and as $g^\prime$ is bounded on each $\overline{B_{j,m,k}}$ we can similarly also find constant $d_3>0$ such that $$\left|\left(g\circ f^{-1}_j\right)(x)-\left(g\circ f^{-1}_j\right)(y)\right|\leq d_3\left|f^{-1}_j(x)-f^{-1}_j(y)\right|\leq d_3d_1|x-y|.$$ Together by the same reasons listed in about proof that $\mathbb{D}$ satisfies property 4, we see property 4 is satisfied, and thus $\Omega$ is a SULD.
\end{proof}
\begin{proposition}
    Suppose $\Omega$ and $\Omega^\prime$ are two simply connected open subset of $\mathbb{H}$ and $B^{z_0}_t$ is the RBM in $\mathbb{H}$ where $z_0\in\Omega$ with the associated local time process $\xi^{z_0}_t$. Let $f$ be a conformal automorphism of $\mathbb{H}$. Then the process $f(B^{z_0}_t)$ is indistinguishable from a time-changed RBM in $\mathbb{H}$.
    \label{tech3}
\end{proposition}
\begin{proof}
    We may assume that $B^{z_0}_t$ is written in the form in the proof of proposition \ref{Constructioninupperhs} with associated $\xi^{z_0}_t$ also given. Write in clean notation $B^{z_0}_t=(\tilde{X^{z_0}_t},Y^{z_0}_t)$ where $Y^{z_0}_t$ is the standard 1 dimensional Brownian motion independent of $\tilde{X^{z_0}_t}$. We also write $X^{z_0}_t$ for the standard 1 dimensional Brownian motion generates $\tilde{X^{z_0}_t}$. Write $f(z_0)=w_0$. As conformal automorphism of $\mathbb{H}$ takes the form $$f(z)=\frac{az+b}{cz+d}\qquad a,b,c,d\in\mathbb{R}, ad-bc=1$$
We have that $f^\prime(z)=\displaystyle\frac{1}{(cz+d)^2}$  and note that it is positive on $\mathbb{R}\setminus\{\frac{-d}{c}\}$. We may remove point $\frac{-d}{c}$ out of consideration as it has zero Lebesgue measure. Consider the associated process $(Y^{w_0}_t,\Xi^{w_0}_t)$ defined by $Y^{w_0}_t=f\left(B_t^{z_0}\right)$ and $$\Xi^{w_0}_t=\int^t_0f^\prime(B_t)\mathrm{d}\xi_s.$$
It suffice to check that $f(B^{z_0}_t)-\mathbf{1}_{\partial\mathbb{H}}\left(f(B^{z_0}_t)\right)\Xi^{w_0}_t$ is a time changed complex Brownian motion. Write $f(z)=u(x,y)+iv(x,y)$ where $z=a+bi$. Apply It\^o's formula, note that $f$ is harmonic:
$$\begin{aligned}
& \mathrm{d}\left( u(\tilde{X}_t, Y_t)- \mathbf{1}_{\partial\mathbb{H}}\left(f(B^{z_0}_t)\right)\Xi_t\right) \\
& =\frac{\partial u}{\partial x}\left(\tilde{X}_t, Y_t\right) \mathrm{d} \tilde{X}_t+\frac{\partial u}{\partial y}\left(\tilde{X}_t, Y_t\right) \mathrm{d} Y_t-\mathbf{1}_{\partial\mathbb{H}}\left(f(B^{z_0}_t)\right)\frac{\partial u}{\partial x}\mathrm{d}\Xi_t \\
& =\frac{\partial u}{\partial x}\left(\tilde{X}_t, Y_t\right)\left(\mathrm{d} X_t+\mathbf{1}_{\partial\mathbb{H}}\left(B^{z_0}_t\right)\mathrm{d} \xi_t\right)+\frac{\partial u}{\partial y}\left(\tilde{X}_t, Y_t\right) \mathrm{d} Y_t-\mathbf{1}_{\partial\mathbb{H}}\left(f(B^{z_0}_t)\right)\frac{\partial u}{\partial x}\mathrm{d} \Xi_t \\
& =\frac{\partial u}{\partial x}\left(\tilde{X}_t, Y_t\right) \mathrm{d} X_t+\frac{\partial u}{\partial y}\left(\tilde{X}_t, Y_t\right) \mathrm{d} Y_t,
\end{aligned}$$
Similarly $$\mathrm{d}\left( v(\tilde{X}_t, Y_t)- \mathbf{1}_{\partial\mathbb{H}}\left(f(B^{z_0}_t)\right)\Xi_t\right) =\frac{\partial v}{\partial x}\left(\tilde{X}_t, Y_t\right) \mathrm{d} X_t+\frac{\partial v}{\partial y}\left(\tilde{X}_t, Y_t\right) \mathrm{d} Y_t.$$
Write $\displaystyle\tilde{u}=u(\tilde{X}_t, Y_t)-\mathbf{1}_{\partial\mathbb{H}}\left(f(B^{z_0}_t)\right)\Xi_t$ and $\displaystyle\tilde{v}=v(\tilde{X}_t, Y_t)- \mathbf{1}_{\partial\mathbb{H}}\left(f(B^{z_0}_t)\right)\Xi_t$. Then their quadratic variation satisfies $$\langle \tilde u\rangle_t=\langle \tilde v\rangle_t=\int_0^t\left|f^{\prime}\left(B_s\right)\right|^2 d s$$ with zero quadratic covariation. Let $$\sigma(t)=\inf \left\{s \geq 0: \int_0^s\left|f^{\prime}\left(B_u\right)\right|^2 d u>t\right\}$$
and set $\tilde{B}^{w_0}_t=\tilde u\left(B^{z_0}_{\sigma(t)}\right)+i \tilde v\left(B^{z_0}_{\sigma(t)}\right)$. By the Dubins-Schwarz theorem, $\tilde{B}^{w_0}_t$ is a complex Brownian motion.
\end{proof}

\section{Acknowledgment}
    I want to express my deepest gratitude to Prof. Dmitry Belyaev as my supervisor of this summer project. He advised me on the topic of this project and supervised me throughout the project.

\bibliographystyle{IEEEtran}
\bibliography{reference}

\end{document}